\pgfplotsset{compat=1.18}
\newtheorem{theorem}{Theorem}[section]
\newtheorem{lemma}[theorem]{Lemma}
\newtheorem*{remark}{Remark}
\newtheorem{claim}[theorem]{Claim}
\newtheorem{proposition}[theorem]{Proposition}
\newtheorem{construction}[theorem]{Construction}
\newenvironment{poc}{\begin{proof}[Proof of claim]}{\end{proof}}
\newenvironment{mycase}[1]{%
    \par\addvspace{\medskipamount}%
    \noindent\textbf{#1}\quad%
}{%
    \par\addvspace{\medskipamount}%
}
\title{Edge density and minimum degree thresholds for $H$-free graphs with unbounded chromatic number}
\author{
Zhuo Wu\thanks{Departament de Matemàtiques, Universitat Politècnica de Catalunya (UPC),
Carrer de Pau Gargallo 14, 08028 Barcelona, Spain. Z. Wu acknowledges the bilateral AEI+DFG research project PCI2024-155080-2: SRC-ExCo – Structure, Randomness and Computational Methods in Extremal Combinatorics, and the PID2023-147202NB-I00 (COCOA: COntemporary COmbinatorics and its Applications), 
all funded by MICIU/AEI/10.13039/501100011033. Email: \texttt{zhuo.wu@upc.edu}}
\and
Yisai Xue\thanks{\textit{Corresponding author}. School of Mathematics and Statistics, Ningbo University, Ningbo, China. Supported by the National Natural Science Foundation of China (No. 12501486). Email: \texttt{xueyisai@nbu.edu.cn}}}
\date{}
\begin{document}

\maketitle

\begin{abstract}
The chromatic threshold $\delta_\chi(H)$ of a graph $H$ is the infimum of $d>0$ such that the chromatic number of every $n$-vertex $H$-free graph with minimum degree at least $dn$ is bounded in terms of $H$ and $d$. A breakthrough result of Allen, Böttcher, Griffiths, Kohayakawa, and Morris determined $\delta_\chi(H)$ for every graph $H$; in particular, if $\chi(H)=r\ge 3$, then $\delta_\chi(H) \in\{\frac{r-3}{r-2},~\frac{2 r-5}{2 r-3},~\frac{r-2}{r-1}\}$. 

In this paper we investigate the trade-off between minimum degree and edge density in the critical window around the chromatic threshold. 
For a fixed graph $H$ with $\chi(H)=r$,  allowing a constant deficit below $\delta_\chi(H)$, 
we prove sharp (up to lower-order terms) upper bounds on the edge density of $n$-vertex $H$-free graphs whose chromatic number diverges. 
Equivalently, within this degree regime we show that a suitable global bound on the number of edges forces the chromatic number to remain bounded. 
Our results thus quantify how global edge density can compensate for a deficit in the local minimum-degree condition near $\delta_\chi(H)$; more specifically, we obtain explicit bounds in two of the three possible cases arising in the trichotomy of $\delta_\chi(H)$. Our extremal constructions---based on Erdős graphs and blowups of Borsuk--Hajnal graphs---show that these bounds are best possible up to $o(n^2)$ terms.
\end{abstract}

\section{Introduction}

A central problem in extremal graph theory concerns the relationship between local density constraints and global structural properties. 
The \textbf{chromatic number} of a graph $G$, denoted $\chi(G)$, is the smallest integer $k$ such that the vertex set $V(G)$ can be partitioned into $k$ independent sets.
A natural and well-studied question is how local conditions---such as large minimum degree together with the exclusion of a fixed subgraph $H$---restrict the possible chromatic number of~$G$.

This line of inquiry leads to the concept of the \textbf{chromatic threshold} $\delta_\chi(H)$, which represents the critical minimum degree above which any $H$-free graph must have a bounded chromatic number. Formally,
\begin{align*}
\delta_\chi(H):=
& \inf \{d: \exists~C=C(H, d)\text{ such that if $G$ is a graph on $n$ vertices}, \\
& \text{with $\delta(G) \geq d n$ and $H \not \subseteq G$, then $\chi(G) \leq C$}\}.
\end{align*}
Progress toward understanding chromatic thresholds was made initially for several special graph classes, including cliques~\cite{goddard2011dense,nikiforov2010chromatic,thomassen2002chromatic}, odd cycles~\cite{thomassen2007chromatic}, and near-bipartite graphs~\cite{luczak2010coloring}. 
These developments paved the way for the breakthrough theorem of Allen, B{\"o}ttcher, Griffiths, Kohayakawa, and Morris~\cite{allen2013chromatic}, who resolved the problem in full by determining $\delta_\chi(H)$ for every graph~$H$.  
Their characterization yields the following trichotomy: if $\chi(H)=r\ge3$, then
\[\delta_\chi(H)\in\left\{\frac{r-3}{r-2},\;\frac{2r-5}{2r-3},\;\frac{r-2}{r-1}\right\}.\]
Moreover, $\delta_\chi(H) \neq \frac{r-2}{r-1}$ if and only if $H$ has a forest in its decomposition family, and $\delta_\chi(H)= \frac{r-3}{r-2}$ if and only if $H$ is $r$-near-acyclic. (For the precise definitions of the decomposition family and the concept of being $r$-near-acyclic, see \cref{sec:pre}.)
For further recent progress, see~\cite{bourneuf2025denseneighborhoodlemmaapplications,huang2025interpolating,liu2024beyond,xue2025directed}.

Recently, Kim, Liu, Shangguan, Wang, Wu, and Xue~\cite{liu-shangguan-wu-xue} proved a sharp stability theorem: any $H$-free graph with minimum degree $\delta(G)\ge (\delta_\chi(H)-o(1))n$ and large chromatic number must be structurally close to an extremal configuration.  This raises a natural question: what structural guarantees remain when the minimum-degree condition is weakened to allow a linear deficit, i.e., when $\delta(G)>(\delta_\chi(H)-c)n$?  In this paper we resolve this question by proving a new type of stability result. We show that stability can be recovered under such a degree deficit provided that a suitable global constraint is imposed on the total number of edges.  More precisely, our main theorem establishes a sharp upper bound on the number of edges in an $H$-free graph whose chromatic number is unbounded under the relaxed minimum-degree condition.  This demonstrates that sufficient global density---measured by the edge count---can indeed compensate for a linear deficit in the local minimum-degree assumption, thereby forcing the chromatic number to remain bounded.

Our main theorems give a precise quantitative expression of this principle. The resulting bounds depend sensitively on the value of the chromatic threshold $\delta_\chi(H)$, and we obtain sharp results in two of the three cases appearing in the trichotomy of Allen et~al.

We remark that the third case of the trichotomy, where $\delta_\chi(H) = \frac{r-2}{r-1}$, is trivial in the context of our study. In this regime, the threshold coincides with the Tur\'an density of $H$, it is impossible to increase the global edge density to compensate for a deficit in the minimum degree. 
Consequently, we focus our attention on the remaining two cases.

\begin{theorem}\label{thm:thm1}
  Let $H$ be a graph with $\chi(H)=r\ge 3$ and $\delta_\chi(H)=\tfrac{2r-5}{2r-3}$. 
  For $\tfrac{2r-5}{2r-2}\le \delta\le \tfrac{2r-5}{2r-3}$, every $n$-vertex $H$-free graph $G$ with minimum degree $\delta(G)\ge\delta n$ and $\chi(G)=\omega(1)$ satisfies
  \[
  e(G)\le \bigl(f_1(r,\delta)+o(1)\bigr)n^2,
  \]
  where \[
f_1(r,\delta) =
\begin{cases}
\dfrac{(-5r^2+17r-14)\delta^2+(10r^2-44r+46)\delta-5r^2+27r-36}{2},
& \text{if } \tfrac{5r-13}{5r-8}<\delta\le\tfrac{2r-5}{2r-3}, \\[1.2ex]
\dfrac{-4 (r-1)\delta^2+4 (2 r-5)\delta+r-3}{10r-28},
& \text{if }\tfrac{2r-5}{2r-2}\le\delta\le\tfrac{5r-13}{5r-8}.
\end{cases}
\]
\end{theorem}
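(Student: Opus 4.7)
The plan is to combine a refined structural decomposition of $G$, obtained by extending the stability theorem of Kim, Liu, Shangguan, Wang, Wu and Xue~\cite{liu-shangguan-wu-xue} through the subcritical degree window $[\tfrac{2r-5}{2r-2},\tfrac{2r-5}{2r-3}]$, with a piecewise quadratic optimization determined by the extremal Borsuk--Hajnal-type skeleton.

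First I would take any $n$-vertex $H$-free graph $G$ with $\delta(G)\ge\delta n$ and unbounded chromatic number. The stability theorem in~\cite{liu-shangguan-wu-xue} is stated for $\delta\ge(\delta_\chi(H)-o(1))n$; the first task is to upgrade it so that, in the subcritical window and under the extra assumption $\chi(G)=\omega(1)$, one still obtains an approximate blowup structure. In the middle case $\delta_\chi(H)=\tfrac{2r-5}{2r-3}$, this structure is (up to $o(n^2)$ edges) a partition
\[
V(G)=U_0\cup U_1\cup\cdots\cup U_k,
\]
where $U_0$ carries a near-independent Erd\H{o}s-type witness of large chromatic number, and $U_0,U_1,\dots,U_k$ realize the bipartite adjacency pattern of the Borsuk--Hajnal skeleton $F^\ast$ responsible for the threshold. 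The fact that $U_0$ is essentially independent follows by combining $H$-freeness with a forest from the decomposition family of $H$, which exists since $\delta_\chi(H)\ne\tfrac{r-2}{r-1}$.

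Writing $\alpha_i=|U_i|/n$, the minimum-degree condition applied in each part yields linear inequalities $\sum_{j\in N_{F^\ast}(i)}\alpha_j\ge\delta$, and the edge count satisfies
\[
e(G)\le\Bigl(\sum_{ij\in E(F^\ast)}\alpha_i\alpha_j\Bigr)n^2+o(n^2).
\]
Upper-bounding $e(G)$ then becomes a constrained quadratic maximization in $\alpha_0,\dots,\alpha_k$, and its optimum is piecewise in $\delta$. For $\delta$ close to $\tfrac{2r-5}{2r-3}$ one collection of degree constraints is simultaneously tight at the maximizer, giving the first branch of $f_1(r,\delta)$; as $\delta$ descends below $\tfrac{5r-13}{5r-8}$ the active-constraint set changes and the maximum migrates to a neighboring face of the feasible polytope, producing the second branch. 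The two formulas agree at $\delta=\tfrac{5r-13}{5r-8}$ and reduce to the Tur\'an value $\tfrac{r-2}{2(r-1)}$ at $\delta=\tfrac{2r-5}{2r-2}$, and (for $r=3$) to the Brandt--Thomassen value $\tfrac{2}{9}$ at $\delta=\tfrac{1}{3}$, which serve as consistency checks.

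The principal obstacle is the first step: extending~\cite{liu-shangguan-wu-xue} from $\delta\ge(\delta_\chi(H)-o(1))n$ to a constant deficit $\delta\ge(\delta_\chi(H)-c)n$. In this regime the minimum-degree condition alone no longer pins down the extremal skeleton, so the argument must exploit $H$-freeness, $\chi(G)=\omega(1)$, and the Erd\H{o}s-type character of the witness simultaneously in order to isolate $F^\ast$ as the correct skeleton. Once the skeleton is identified, the quadratic program is mechanical, and the piecewise structure of $f_1(r,\delta)$ reflects precisely the transition in its optimal vertex as $\delta$ varies across $\tfrac{5r-13}{5r-8}$.
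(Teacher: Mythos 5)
Your proposal reaches for a genuinely different route than the paper's, but the route has an essential gap at exactly the step you flag as the ``principal obstacle,'' and no argument is offered to fill it. You want to extend the stability theorem of Kim--Liu--Shangguan--Wang--Wu--Xue from $\delta\ge(\delta_\chi(H)-o(1))n$ to a constant deficit and conclude that $G$ carries an approximate blowup of a fixed Borsuk--Hajnal skeleton $F^\ast$. But in the subcritical window there is no single skeleton: the extremal configurations $\mathrm{BH}^{\star}_{r,\delta}$ and $\mathrm{BH}^{\star\star}_{r,\delta}$ differ structurally (not just in part sizes) for $\delta$ on opposite sides of $\tfrac{5r-13}{5r-8}$, and below $\tfrac{2r-5}{2r-2}$ the extremum collapses to the Tur\'an graph altogether. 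Asserting that $H$-freeness, $\chi(G)=\omega(1)$, and the Erd\H{o}s-type witness ``isolate $F^\ast$'' begs precisely the structural question that such a stability upgrade would have to answer, and the conclusion you would need is in fact \emph{stronger} than anything the paper proves (the paper only bounds $e(G)$, with no approximate-blowup structure claimed in this regime).

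The paper avoids all of this by working at the reduced-graph level without a stability step. It applies the regularity lemma, picks the class $X(I,Y)$ with maximal chromatic number, and uses Claim~3.3 of \cite{liu-shangguan-wu-xue} together with \cref{lmm:lmm9} and \cref{lmm:lmm10} only to deduce that $R[I]$ is $K_{r-1}$-free and $R[Y]$ is $K_{r-2}$-free, plus a lower bound on $|I|$. The edge bound on $R$ (hence on $G$) then follows from a three-way case analysis on $(|X|,|Y|,|Z|)$ in which the work is carried by the Zykov-symmetrization Tur\'an-type bounds of \cref{lmm:basic} and \cref{lmm:xyz}; the piecewise form of $f_1(r,\delta)$ emerges from which of the three cases dominates, not from a migration between faces of a single feasible polytope tied to one skeleton. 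Your quadratic program in $\alpha_i$ cannot even be written down until those $K_{r-1}$/$K_{r-2}$-freeness constraints on $R$ are established, and once they are, one must handle the possibility $|X|<|Z|$ (Case 2) which does not fit a blowup-of-$F^\ast$ picture at all. So the final optimization you envision does appear (this is essentially Claim~\ref{prop:Lagrangian} in Case 3), but only as one of three branches, and only after the reduced-graph structural step that your proposal leaves unproved.
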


To illustrate the behavior of this bound, we plot $f_1(r,\delta)$ for the case $r=4$ in Figure~\ref{fig:f1_r4}, which highlights the transition between the two regimes at $\delta = 7/12$.

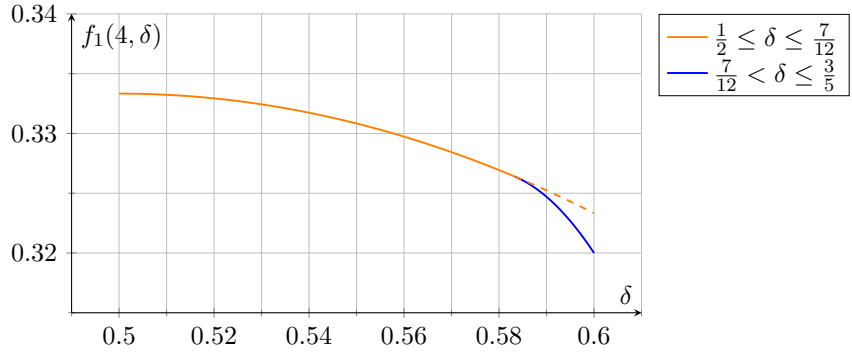
\begin{figure}
    \centering
    \begin{tikzpicture}[scale=0.9]
    \begin{axis}[
        axis lines=middle,
        xlabel={$\delta$},
        ylabel={$f_1(4,\delta)$},
        xmin=0.5, xmax=0.6,
        ymin=0.325, ymax=0.33,
        grid=both,
        minor tick num=1,
        samples=200,
        restrict y to domain=-1:1,
        legend pos=outer north east,
        width=10cm,
        height=6cm,
        enlargelimits={abs=0.01},
    ]
        \addplot[orange, thick, smooth, domain=0.5:0.5833] {-x^2 + x + 1/12};
        \addplot[blue, thick, smooth, domain=0.5833:0.6] {-13*x^2 + 15*x - 4};
        \addplot[orange, thick, dashed, smooth, domain=0.5833:0.6] {-x^2 + x + 1/12};
        \legend{{$\frac{1}{2}\le\delta\le\frac{7}{12}$}, {$\frac{7}{12}<\delta\le\frac{3}{5}$}};
    \end{axis}
\end{tikzpicture}
    \caption{The edge density bound function $f_1(4,\delta)$ within the critical window.}
    \label{fig:f1_r4}
\end{figure}

\begin{remark}
The bound in Theorem~\ref{thm:thm1} is asymptotically tight; explicit constructions based on blowups of Borsuk–Hajnal graphs matching these densities are provided in \cref{sec2.1} (see Figure~\ref{fig:construction} (a) and (b)).
\cref{thm:thm1} can be stated as a sufficient condition for bounded chromatic number in terms of a ``density-deficit" trade-off. 
Let $\rho = e(G)/n^2$ be the edge density. 
Theorem~\ref{thm:thm1} implies that if
\[
h(\rho, \delta) := \rho + A(\delta - B)^2 > C,
\]
then $\chi(G)$ is bounded. 
Notably, this trade-off is linear in the global density $\rho$ but quadratic in the local parameter $\delta$.
The parameters $A, B, C$ depend on the range of $\delta$:

\begin{itemize}
    \item \textbf{Upper Regime} ($\frac{5r-13}{5r-8} < \delta \le \frac{2r-5}{2r-3}$):
    In this range, the parameters are given by:
    \[
    A = \frac{5r^2-17r+14}{2}, \quad 
    B = \frac{5r^2-22r+23}{5r^2-17r+14}, \quad 
    C = \frac{5r^2-22r+25}{2(5r^2-17r+14)}.
    \]
    
    \item \textbf{Lower Regime} ($\frac{2r-5}{2r-2} \le \delta \le \frac{5r-13}{5r-8}$):
    In this range, the parameters are given by:
    \[
    A = \frac{2r-2}{5r-14}, \quad 
    B = \frac{2r-5}{2r-2}, \quad 
    C = \frac{r-2}{2r-2}.
    \]
\end{itemize}
\end{remark}

\begin{theorem}\label{thm:thm2}
  Let $H$ be a graph with $\chi(H)=r\ge 4$ and $\delta_\chi(H)=\tfrac{r-3}{r-2}$. 
  For $\tfrac{r-3}{r-1}\le \delta\le \tfrac{r-3}{r-2}$, every $n$-vertex $H$-free graph $G$ with minimum degree $\delta(G)\ge\delta n$ and $\chi(G)=\omega(1)$ satisfies
  \[
  e(G)\le \bigl(f_2(r,\delta)+o(1)\bigr)n^2,
  \]
  where \[
f_2(r,\delta)=\delta(1-\delta)+\frac{(1-\delta)^2}{4}+\frac{(r-4)\delta^2}{2(r-3)}.
\]
\end{theorem}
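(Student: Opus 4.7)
The plan is to adapt the structural-and-counting framework of Allen--Böttcher--Griffiths--Kohayakawa--Morris \cite{allen2013chromatic}, together with its stability refinement in \cite{liu-shangguan-wu-xue}, to the sub-threshold regime $\tfrac{r-3}{r-1}\le\delta\le\tfrac{r-3}{r-2}$. First I would prove a structural decomposition: a partition $V(G) = U \sqcup V_1 \sqcup \cdots \sqcup V_{r-3} \sqcup W$ with $|W|=o(n)$ in which, after $o(n^2)$ edge modifications, (i) each $V_i$ is independent, (ii) the bipartite graphs between any two distinct classes among $\{V_1,\dots,V_{r-3},U\}$ are complete, and (iii) $G[U]$ is a Borsuk--Hajnal-type graph: an almost-complete bipartite skeleton $U=U_1\sqcup U_2$ augmented by a sparse Erdős-type subgraph carrying the unbounded chromatic number. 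Such a decomposition should follow by combining the structural portion of \cite{allen2013chromatic} with the stability machinery of \cite{liu-shangguan-wu-xue}, extended to the sub-threshold regime; it reflects the shape of the extremal construction in \cref{sec2.1}.

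Write $\alpha=|U|/n$ and $\beta_i=|V_i|/n$. Since each $V_i$ is independent and $\delta(G)\ge \delta n$, one has $\beta_i\le 1-\delta+o(1)$ immediately. The critical bound, and the main obstacle of the proof, is the sharper inequality $\alpha\le 1-\delta+o(1)$. Indeed, the minimum-degree condition alone only yields $\alpha\le 2(1-\delta)$, so the improvement to $1-\delta$ must come from $H$-freeness: if $\alpha>1-\delta+\varepsilon$, then the internal degree required inside $U$ would force enough Erdős-type edges to exhibit the decomposition-family forest $F$ of $H$ in $G[U]$, and the almost-complete bipartite joins between $U$ and each $V_i$ would then allow these $F$-copies to be extended by a $K_{r-2}$ (formed by taking one vertex per $V_i$ together with a suitable vertex of $U$), producing $F\vee K_{r-2}\subseteq G$ and hence $H\subseteq G$---a contradiction.

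With these size bounds in hand, the count
\[
e(G)\le \sum_{i<j}|V_i||V_j| + |U|\sum_{i}|V_i| + |U_1||U_2| + o(n^2)
\]
reduces by convexity (maximizing $\sum_{i<j}\beta_i\beta_j$ subject to $\sum\beta_i=1-\alpha$ gives $\tfrac{r-4}{2(r-3)}(1-\alpha)^2$ at $\beta_i=(1-\alpha)/(r-3)$, and $|U_1||U_2|\le\alpha^2n^2/4$) to $e(G)/n^2\le f(\alpha)+o(1)$, where
\[
f(\alpha)=\tfrac{r-4}{2(r-3)}(1-\alpha)^2+\alpha(1-\alpha)+\tfrac{\alpha^2}{4}.
\]
A direct calculation yields $f'(\alpha)=\tfrac{1}{r-3}\bigl(1-\tfrac{r-1}{2}\alpha\bigr)$, so $f$ is strictly increasing on $[0,\tfrac{2}{r-1}]$. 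Since $\delta\ge\tfrac{r-3}{r-1}$ gives $1-\delta\le\tfrac{2}{r-1}$, combining with $\alpha\le 1-\delta+o(1)$ yields $f(\alpha)\le f(1-\delta)+o(1)$, and one verifies directly that $f(1-\delta)=f_2(r,\delta)$.

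The principal obstacle is therefore the $H$-embedding argument supporting $\alpha\le 1-\delta+o(1)$. Unlike the near-threshold analysis of \cite{liu-shangguan-wu-xue}, which treats $\delta\ge \delta_\chi(H)-o(1)$, the present sub-threshold regime requires a sharper argument: one must fully exploit the $r$-near-acyclicity of $H$---specifically, the existence of a decomposition-family forest $F$ whose component structure blocks the relevant extensions---to rule out enlargements of $U$ that are consistent with the minimum-degree hypothesis but would otherwise push the edge count up to the Turán-type density $\tfrac{r-2}{2(r-1)}$.
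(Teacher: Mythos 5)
Your proposal correctly identifies the extremal structure (Construction~\ref{cons3}), and your closing optimization is right: with $\alpha\le 1-\delta$ the function $f(\alpha)$ is increasing for $\delta\ge\tfrac{r-3}{r-1}$ and $f(1-\delta)=f_2(r,\delta)$; without that cap it saturates at the Turán density $\tfrac{r-2}{2(r-1)}$. But the route you propose is genuinely different from the paper's, and the two things you outsource to ``structural machinery'' are exactly where the substance lies.

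First, the paper never proves a clean decomposition $V(G)=U\sqcup V_1\sqcup\cdots\sqcup V_{r-3}\sqcup W$ with independent, pairwise-complete $V_i$'s and a near-bipartite $G[U]$. It works entirely in the reduced graph: after choosing $I,Y$ to maximize $\chi(G[X(I,Y)])$, it derives the three freeness properties ($R[I]$ is $K_{r-1}$-free, $R[Y]$ is $K_{r-2}$-free, $R$ is $(X,Y,r-2)$-free via Lemma~\ref{lmm:lmm5.3}) and the degree inequality $|X|/2+|Y|>\delta k$. Notice that in the extremal picture the set $X=I\setminus Y$ can be empty (the Erd\H{o}s construction has $X=\emptyset$ in the reduced-graph sense), but in general it need not be, and the $(r-3)$-colorable piece is not simply ``the $r-3$ large color classes'': it is built by Zykov-symmetrizing $R$ and running a Hall/defect-matching argument (Claim~\ref{cl:r-3-color}) on an auxiliary bipartite graph between twin classes of $X$ and twin classes of $Y$, using the three freeness properties. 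That claim — producing a $K_{r-2}$-free induced subgraph of size at least $|X|/2+|Y|\ge\delta k$ — is the heart of the proof; Lemma~\ref{lmm:basic} then yields the edge bound. Your proposal has no analogue of this step.

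Second, the step you flag yourself as ``the principal obstacle'' — the bound $\alpha\le 1-\delta+o(1)$ via an $H$-embedding argument — is not just unproven, it is unclear how to run it without already knowing (i) that exactly $r-3$ classes receive density $\ge 1/2$ from the high-chromatic core, and (ii) that $G[U]$ is almost triangle-free so $e(G[U])\le \alpha^2 n^2/4+o(n^2)$. Neither follows from the stability theorem of \cite{liu-shangguan-wu-xue}, which is stated for $\delta\ge\delta_\chi(H)-o(1)$, whereas here $\delta$ can be as small as $\tfrac{r-3}{r-1}$, well below the threshold. In this whole sub-threshold window the graph is not forced to look like a single extremal configuration, which is precisely why the paper avoids a structural decomposition altogether and instead extracts only the one inequality ($K_{r-2}$-free set of size $\ge\delta k$) needed for the edge count. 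So your plan has a genuine gap: the matching argument in Claim~\ref{cl:r-3-color} (or an equivalent) is missing, and your proposed substitute is not developed to the point where one can see it succeed.
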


The bound presented in \cref{thm:thm2} are sharp, as demonstrated by explicit constructions based on Erd\H{o}s graphs (see  Figure \ref{fig:construction} (c)). 

\begin{remark}
The lower bounds on~$\delta$ in our theorems are genuine.  At the lower endpoint of each range, our bounds coincide with the classical Turán bound for $K_r$-free graphs, and the extremal configurations reduce to complete $(r-1)$-partite graphs.  Below this threshold no stronger estimate is possible, since Turán extremal graphs already maximise the number of edges under mere $H$-freeness.  
Thus our results capture exactly the range in which one can obtain edge-density bounds that go strictly beyond the Turán extremal behaviour.
\end{remark}

\section{Preliminaries}\label{sec:pre}

\paragraph{Notation.}

For positive integers $k \le n$, let $[n]=\{1,\ldots,n\}$, and let $\binom{[n]}{k}$ denote the family of all $k$-element subsets of $[n]$.  
We use standard asymptotic notation as $n\to\infty$: $o(1)$ denotes a quantity tending to~$0$, while $\omega(1)$ denotes a quantity tending to~$\infty$.  
For clarity of presentation, we omit floors and ceilings whenever they are not essential.

Let $G=(V,E)$ be a graph, where $V$ is the vertex set and $E$ is the edge set.  
The \textbf{order} of $G$ is $|V(G)|$, and the \textbf{number of edges} is $e(G):=|E(G)|$.  
For a vertex $v\in V(G)$, its \textbf{degree} is $d(v)$, and the \textbf{minimum degree} of $G$ is $\delta(G)=\min_{v\in V(G)} d(v)$.  
The \textbf{girth} of $G$, denoted $g(G)$, is the length of its shortest cycle.  
For $U\subseteq V(G)$, the \textbf{induced subgraph} $G[U]$ is the graph with vertex set $U$ and all edges with both endpoints in $U$.  
For $v\in V(G)$, we write $G-v$ for the subgraph induced by $V(G)\setminus\{v\}$.  
For two graphs $G$ and $H$, their \textbf{join} $G\vee H$ is obtained by taking disjoint copies of $G$ and $H$ and adding all edges between $V(G)$ and $V(H)$.

Given a graph $G$, let $\alpha(G)$ denote the size of a largest independent set in $G$.  
More generally, Hajnal, and independently Erd\H{o}s and Rogers \cite{erdos1962construction}, introduced the \textbf{$K_p$-independence number} $\alpha_p(G)$, denoting the maximum size of a set $S$ such that $G[S]$ contains no copy of $K_p$.

For a graph $H$ with $\chi(H)=r\ge3$, the \textbf{decomposition family} $\mathcal{M}(H)$ consists of all bipartite graphs obtained from $H$ by deleting $r-2$ color classes in some proper $r$-coloring of~$H$.  
Following {\L}uczak and Thomass{\'e}~\cite{luczak2010coloring}, a graph $H$ with $\chi(H)=3$ is called \textbf{near-acyclic} if it admits a partition into a forest $F$ and an independent set $S$ such that every odd cycle of $H$ meets $S$ in at least two vertices.\footnote{For instance, $C_5$ is near-acyclic.}  
We say that $H$ is \textbf{$r$-near-acyclic} if $\chi(H)=r\ge3$ and one can delete $r-3$ independent sets from $H$ to obtain a near-acyclic graph.

\subsection{Extremal graphs}\label{sec2.1}

We begin by defining the building blocks for the constructions that show our main theorems are sharp. These constructions rely on graphs that simultaneously have high girth and high chromatic number.

\begin{theorem}[\L uczak and Thomass\'e \cite{luczak2010coloring}]\label{LT}
  For every $k, \ell \in \mathbb{N}$, a real number $\alpha>0$ and $n \ge n_0(k,\ell,\alpha)$, there exists an $n$-vertex $(k,\ell,\alpha)$-Borsuk-Hajnal graph $\mathrm{BH}=\mathrm{BH}(n,k,\ell,\alpha)$ on vertex set $U' \cup X \cup W$ satisfying the following:
  \begin{itemize}
      \item $\chi(\mathrm{BH}[U']) \geq k, \quad   g(\mathrm{BH}[U']) \geq \ell, \quad  \text { and } \quad \delta(\mathrm{BH}) \geq (1/3-\alpha) n$;
      \item $|U'| \le \alpha n, \quad  \text { and } \quad  \forall u\in U'$, $|N(u)\cap X| \ge (1/2-\alpha)|X|$;
      
      \item $|X|=2|W|$, $\forall w\in W$, $N(w)=X$, and $X$ is an independent set;
 
      \item Every subgraph $H \subseteq \mathrm{BH}$ with $|H|<\ell$ and $\chi(H)=3$ is near-acyclic, i.e., $\delta_{\chi}(H)=0$.
  \end{itemize}
\end{theorem}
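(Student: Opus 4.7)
The plan is to assemble $\mathrm{BH}$ as the union of three carefully combined ingredients: a small Erd\H{o}s-type graph $R$ placed on the set $U'$ providing high chromatic number and high girth locally; a complete bipartite ``bulk'' on $W \cup X$ providing most of the minimum degree; and a bipartite bridge of edges between $U'$ and $X$ in which each vertex of $U'$ meets roughly half of $X$. Concretely, set $|U'| = \lceil \alpha n \rceil$ and, via a probabilistic construction, place on $U'$ a graph $R$ with $\chi(R) \ge k$, $g(R) \ge \ell$, and fractional chromatic number $\chi_f(R) \le 1/(1/2 - \alpha)$. Partition the remaining vertices into $W$ and $X$ with $|X| = 2|W| \approx 2(n-|U'|)/3$, take all $W$-$X$ edges, and leave $W$ and $X$ each as independent sets with no edges between $W$ and $U'$. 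Finally, for each $u \in U'$, select a subset $T_u \subseteq X$ of size at least $(1/2 - \alpha)|X|$ and connect $u$ to every vertex of $T_u$; the $T_u$'s are chosen so that $T_{u_1} \cap T_{u_2} = \emptyset$ whenever $u_1, u_2$ are joined by an odd-length walk in $R$ of length less than $\ell$.

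The first three bullets then follow by direct checks. The size bound $|U'| \le \alpha n$, the structural properties of $W$ and $X$, and the density $|N(u) \cap X| \ge (1/2 - \alpha)|X|$ are built into the construction. A brief degree count gives $d(w) = |X| \ge (2/3 - O(\alpha))n$, $d(x) \ge |W| \ge (1/3 - O(\alpha))n$, and $d(u) \ge (1/2 - \alpha)|X| \ge (1/3 - O(\alpha))n$, yielding $\delta(\mathrm{BH}) \ge (1/3 - \alpha)n$ after a harmless rescaling of $\alpha$. Since $\mathrm{BH}[U'] = R$, the chromatic number and girth of $\mathrm{BH}[U']$ inherit directly from those of $R$.

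The fourth bullet is the crux. Given $H \subseteq \mathrm{BH}$ with $\chi(H) = 3$ and $|V(H)| < \ell$, consider the partition $S := V(H) \cap X$ and $F := V(H) \setminus X$. Then $S$ is independent (since $X$ is); and since no edges exist within $W$ or between $W$ and $U'$, $H[F]$ is a subgraph of $R$ on fewer than $g(R)$ vertices, hence a forest. Any odd cycle of $H$ must meet $S$, for otherwise it would sit inside the forest $F$. If some odd cycle $C$ met $S$ in exactly one vertex $x \in X$, then $C - x$ would be an odd-length path in $R$ whose two endpoints both belong to $\{u \in U' : x \in T_u\}$, contradicting the disjointness condition arranged for the $T_u$'s. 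Hence every odd cycle meets $S$ in at least two vertices and $H$ is near-acyclic.

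The main obstacle lies in the first step: producing a graph $R$ with simultaneously large chromatic number, large girth, and small fractional chromatic number. The constraint $\chi_f(R) \le 1/(1/2-\alpha)$ is forced by the disjointness requirement, since the independent sets $I_x := \{u \in U' : x \in T_u\}$ indexed by $X$ must form a fractional cover of $U'$ with density at least $(1/2 - \alpha)$ at each vertex. Such graphs exist provided $n$ is large enough as a function of $k, \ell, \alpha$, and the heart of the \L uczak--Thomass\'e argument is a refined probabilistic (or explicit combinatorial) construction meeting all three requirements simultaneously.
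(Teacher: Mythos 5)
This statement is quoted from \L uczak--Thomass\'e~\cite{luczak2010coloring} and the paper gives no proof of it, so there is no internal argument to compare against; I assess your sketch on its own merits.

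Your overall architecture is the right one: a small high-girth, high-chromatic-number core on $U'$, a complete bipartite bulk on $W\cup X$ carrying the degrees, and a bipartite bridge between $U'$ and $X$ in which each $u\in U'$ meets roughly half of $X$. The first three bullets indeed follow by routine bookkeeping, and your reduction of the fourth bullet to ``$T_{u_1}\cap T_{u_2}=\varnothing$ whenever $u_1,u_2$ are joined by a short odd path in $R$'' is sound.

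The gap lies in your final paragraph, where you assert that the constraint you need is $\chi_f(R)\le 1/(1/2-\alpha)$. That is not the right quantity. The sets $I_x=\{u\in U':x\in T_u\}$ must be independent not in $R$, but in the auxiliary graph $\Gamma$ on $U'$ whose edges are the pairs joined in $R$ by an odd walk (or path) of length $<\ell$; the disjointness you arranged amounts precisely to $I_x$ being $\Gamma$-independent, and the fractional-cover argument therefore forces $\chi_f(\Gamma)\le 1/(1/2-\alpha)$, not $\chi_f(R)\le 1/(1/2-\alpha)$. These two quantities can be wildly different even for graphs of large girth: already for $C_5$ (girth $5$, $\chi_f=5/2$) the ``odd-walk $<5$'' closure $\Gamma$ is the complete graph $K_5$. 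A random or Erd\H{o}s-type graph with $\chi(R)\ge k$, $g(R)\ge\ell$, $\chi_f(R)\approx 2$ would typically have $\Gamma$ nearly complete, so your condition is far too weak to carry out the construction. The actual content of the Borsuk--Hajnal construction is exactly that the geometry of the sphere makes $\Gamma$ (not just $R$) have small fractional chromatic number: in the Borsuk graph an odd-length walk moves you to an almost-antipodal point (each step is a near-antipodal flip, and the angular error is kept below $O(1/\ell)$), so after any short odd walk the ``hemisphere'' sets remain nearly disjoint. Your sketch does not invoke this geometric invariance, and without it the purported probabilistic construction of $R$ cannot deliver the disjointness you need; the heavy lifting of the \L uczak--Thomass\'e argument is precisely here and has been elided rather than reconstructed.

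A secondary, minor remark: the contradiction in your fourth-bullet argument only produces an odd \emph{path} of length $<\ell$, not an arbitrary odd walk, so you may as well impose the disjointness condition only for pairs joined by short odd paths; this makes $\Gamma$ slightly sparser but does not change the substance of the gap above.
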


Roughly speaking, the part $\mathrm{BH}[U']$ is a variant of Borsuk graph with high girth and chromatic number where vertices are uniformly distributed on a high-dimensional sphere and two vertices are adjacent if they are at almost antipodal positions. We refer the reader to \cite{allen2013chromatic} for the details of the construction.

We can now define two families of building blocks.

\begin{construction}\label{cons22}
Let $\mathrm{BH}^-$ be the graph on vertex set $U'\cup X$ obtained from $\mathrm{BH}(n,k,\ell,\alpha)$ by removing the vertex set $W$. 
We define two families of graphs, $\mathrm{BH}_{r, \delta}^{\star}$ and $\mathrm{BH}_{r, \delta}^{\star\star}$, which are obtained from $BH^{-}$ by adding vertex sets $Y=Y_1 \cup \cdots \cup Y_{r-3}$ and $Z$.
The edges are added as follows:
\begin{enumerate}
    \item[\rm 1.] The sets $Y_1, \ldots, Y_{r-3}$ form a complete $(r-3)$-partite graph.
    \item[\rm 2.] A complete join is added between $X$ and $Z$.
    \item[\rm 3.] A complete join is added between $Y$ and $U'\cup X \cup Z$.
\end{enumerate}
We choose the parameter $\alpha$ in \cref{LT} sufficiently small so that $|U'| = o(n)$. The size of $X$ is chosen such that the total number of vertices is $n$, with the sizes of the other sets determined by the following ratios (matching the bounds in \cref{thm:thm1}):
\begin{itemize}
    \item For $\mathrm{BH}_{r, \delta}^{\star}:|Y_i|=\frac{1-\delta}{2 \delta-2(r-3)(1-\delta)}|X|$ and $|Z|=\frac{1-2 \delta+(r-3)(1-\delta)}{2 \delta-2(r-3)(1-\delta)}|X|$.
    \item For $\mathrm{BH}_{r, \delta}^{\star\star}:|Y_i|=\frac{6 \delta-1}{2(\delta(5 r-14)-(r-3)(6 \delta-1))}|X|$ and $|Z|=\frac{(1-2 \delta)(5 r-14)+(r-3)(6 \delta-1)}{2(\delta(5 r-14)-(r-3)(6 \delta-1))}|X|$.
\end{itemize}
\end{construction}

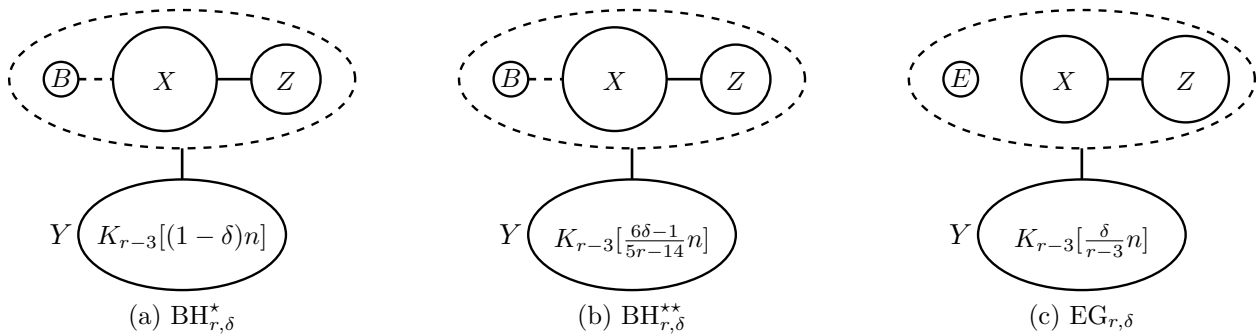
\begin{figure}[!ht]
    \centering
\begin{tikzpicture}[scale=0.46]
  \draw[line width=0.901pt] (0, 3) circle (0.5cm);
  \node[below] at (0,3.6) {\small $B$};
  \draw[line width=0.901pt] (3, 3) circle (1.5cm);
  \node[below] at (3,3.5) {\small $X$};
  \draw[line width=0.901pt] (6.5, 3) circle (1cm);
  \node[below] at (6.5,3.5) {\small $Z$};
  \draw[dashed,line width=1pt] (0.5, 3)  -- (1.5, 3);
  \draw[line width=1pt] (4.5, 3)  -- (5.5, 3);
  \draw[dashed,line width=0.901pt] (3.5, 3) ellipse (5cm and 2cm);
  \draw[line width=0.901pt] (3.5, -1.5) ellipse (3cm and 1.6cm);
  \node[below] at (3.5, -0.9) {\small $K_{r-3}[(1-\delta)n]$};
  \node[below] at (0, -0.9) {$Y$};
  \node[below] at (3.5, -3.2) {\small (a) $\mathrm{BH}^{\star}_{r,\delta}$};
  \draw[line width=1pt] (3.5, 1)  -- (3.5, 0.1);

  \draw[line width=0.901pt] (0+13.001, 3) circle (0.5cm);
  \node[below] at (0+13.001,3.6) {\small $B$};
  \draw[line width=0.901pt] (3+13.001, 3) circle (1.5cm);
  \node[below] at (3+13.001,3.5) {\small $X$};
  \draw[line width=0.901pt] (6.5+13.001, 3) circle (1cm);
  \node[below] at (6.5+13.001,3.5) {\small $Z$};
  \draw[dashed,line width=1pt] (0.5+13.001, 3)  -- (1.5+13.001, 3);
  \draw[line width=1pt] (4.5+13.001, 3)  -- (5.5+13.001, 3);
  \draw[dashed,line width=0.901pt] (3.5+13.001, 3) ellipse (5cm and 2cm);
  \draw[line width=0.901pt] (3.5+13.001, -1.5) ellipse (3cm and 1.6cm);
  \node[below] at (3.5+13.001, -0.9) {\small $K_{r-3}[\frac{6\delta-1}{5r-14}n]$};
  \node[below] at (0+13.001, -0.9) {$Y$};
  \node[below] at (3.5+13.001, -3.2) {\small (b) $\mathrm{BH}^{\star\star}_{r,\delta}$};
  \draw[line width=1pt] (3.5+13.001, 1)  -- (3.5+13.001, 0.1);

  \draw[line width=0.901pt] (0+26.001, 3) circle (0.5cm);
  \node[below] at (0+26.001,3.6) {\small $E$};
  \draw[line width=0.901pt] (3+26.001, 3) circle (1.25cm);
  \node[below] at (3+26.001,3.5) {\small $X$};
  \draw[line width=0.901pt] (6.5+26.001, 3) circle (1.25cm);
  \node[below] at (6.5+26.001,3.5) {\small $Z$};
  \draw[line width=1pt] (4.25+26.001, 3)  -- (5.25+26.001, 3);
  \draw[dashed,line width=0.901pt] (3.5+26.001, 3) ellipse (5cm and 2cm);
  \draw[line width=0.901pt] (3.5+26.001, -1.5) ellipse (3cm and 1.6cm);
  \node[below] at (3.5+26.001, -0.9) {\small $K_{r-3}[\frac{\delta}{r-3}n]$};
  \node[below] at (0+26.001, -0.9) {$Y$};
  \node[below] at (3.5+26.001, -3.2) {\small (c) $\mathrm{EG}_{r,\delta}$};
  \draw[line width=1pt] (3.5+26.001, 1)  -- (3.5+26.001, 0.1);
  
\end{tikzpicture}
\caption{The illustration of $\mathrm{BH}^{\star}_{r,\delta}$, $\mathrm{BH}^{\star\star}_{r,\delta}$ and $\mathrm{EG}_{r,\delta}$, where $B$ denotes Borsuk graph and $E$ denotes Erd\H{o}s graph.}
\label{fig:construction}
\end{figure}

\begin{remark}
Let $H$ be a graph with $\chi(H)=r$ that is not $r$-near-acyclic, which corresponds to the threshold $\delta_\chi(H)=\frac{2r-5}{2r-3}$.
We note that our extremal constructions, $\mathrm{BH}^{\star}_{r,\delta}$ and $\mathrm{BH}^{\star\star}_{r,\delta}$, are derived from the $r$-Borsuk–Hajnal graph by  adjusting the sizes of its independent sets. 
The underlying $r$-Borsuk–Hajnal graph was shown to be $H$-free in~\cite{allen2013chromatic}, providing the structural basis for our construction.
\end{remark}

 For any $k, \ell \in \mathbb{N}$, a \textbf{$(k, \ell)$-Erd\H{o}s graph} is a graph with chromatic number at least $k$ and girth at least $\ell$.
 Erd\H{o}s \cite{erdos1959graph} showed the existence of such graphs for any $k, \ell$.

\begin{construction}\label{cons3}
Let $\mathrm{EG}_{r,\delta}$ be the graph obtained from an Erd\H{o}s graph $E$ by adding vertex sets $X$, $Z$ and $Y=Y_1\cup \cdots\cup Y_{r-3}$.
The edges are added as follows:
\begin{enumerate}
    \item[\rm 1.] The sets $Y_1, \ldots, Y_{r-3}$ form a complete $(r-3)$-partite graph.
    \item[\rm 2.] A complete join is added between $X$ and $Z$.
    \item[\rm 3.] A complete join is added between $V(E)\cup X\cup Z$ and $Y$.
\end{enumerate}
We choose the Erd\H{o}s graph $E$ such that $|V(E)| = o(n)$. The size of each set $Y_i$ is chosen such that the total number of vertices is $n$, with the sizes of $X$ and $Z$ determined by the following ratio:
\begin{itemize}
    \item $|X|=|Z|=\frac{(r-3)(1-\delta)}{2\delta}|Y_i|$.
\end{itemize}
\end{construction}

\begin{remark}
Let $H$ be a graph with $\chi(H)=r$. 
We verify that our construction $\mathrm{EG}_{r,\delta}$ is indeed $H$-free. 
Since the Erd\H{o}s graph component is chosen to have a large girth, any subgraph induced by $|H|$ vertices in $V(E)\cup X\cup Z$ is $2$-colorable. 
Hence, any subgraph induced by $|H|$ vertices in the entire graph $\mathrm{EG}_{r,\delta}$ is $(r-1)$-colorable, which implies that $\mathrm{EG}_{r,\delta}$ is $H$-free.
\end{remark}

\subsection{The Regularity lemma}

In this section, we state some auxiliary theorems. First we introduce the famous regularity lemma. Let $(A, B)$ be a pair of subsets of vertices of $G$. Let $e(A, B)$ denote the number of edges with one endpoint in $A$ and the other in $B$. Define the \textbf{density} of the pair $(A, B)$ as $d(A, B)=\frac{e(A, B)}{|A||B|}$. For any $\varepsilon>0$, we say that $(A, B)$ is \textbf{$\varepsilon$-regular} if $|d(A, B)-d(X, Y)|<\varepsilon$ for every $X \subseteq A$ and $Y \subseteq B$ with $|X| \geq \varepsilon|A|$ and $|Y| \geq \varepsilon|B|$. Moreover, given $0 < d < 1$, we say that $(A, B)$ is \textbf{$(\varepsilon, d)$-regular} if it is $\varepsilon$-regular and has density at least $d$.

A partition $V_0 \cup V_1 \cup \cdots \cup V_k$ of $V(G)$ is said to be an \textbf{$\varepsilon$-regular partition} if $|V_0| \leq \varepsilon n,|V_1|=\cdots=|V_k|$, and all but at most $\varepsilon k^2$ of the pairs $(V_i, V_j)$, where $1\le i,j\le k,$ are $\varepsilon$-regular. Given an $\varepsilon$-regular partition $V_0 \cup V_1 \cup \cdots \cup V_k$ of $V(G)$ and $0<d<1$, we define a graph $R$, called the \textbf{$(\varepsilon, d)$-reduced graph} of $G$, as follows: the vertex set is $V(R)=[k]$ and two vertices $i,j$ satisfy $ij \in E(R)$ if and only if $(V_i, V_j)$ is an $(\varepsilon, d)$-regular pair. The partition classes $V_1, \ldots, V_k$ are called the \textbf{clusters} of $G$. For brevity, for each $I\subseteq [k]$ we will write $V_I=\cup_{i\in I}V_i$.

We will use the following minimum degree form of the regularity lemma.

\begin{theorem}[Regularity Lemma-Degree Form, Theorem~1.10 of \cite{komlos1995szemeredi}]\label{thm:RL}
    Let $0<\varepsilon<d<\delta<1$, and let $k_0 \in \mathbb{N}$. There exists a constant $k_1=k_1(k_0, \varepsilon, \delta, d)$ such that the following holds. Every graph $G$ on $n>k_1$ vertices, with minimum degree $\delta(G) \geq \delta n$, has an $(\varepsilon, d)$-reduced graph $R$ on $k$ vertices, with $k_0 \leq k \leq k_1$ and $\delta(R) \geq(\delta-d-\varepsilon) k$.
\end{theorem}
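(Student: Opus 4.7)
The plan is to derive the degree form from the classical Szemerédi Regularity Lemma by adding a cleanup step that absorbs a small number of pathological clusters into the exceptional class, so that a purely global bound on irregular pairs becomes a per-cluster bound.

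First, I would apply the classical Szemerédi Regularity Lemma with a smaller parameter $\varepsilon_0 \ll \varepsilon$ (concretely, $\varepsilon_0 \le \varepsilon^2/16$ suffices) and starting partition size $k_0' \ge k_0$, obtaining a partition $V_0' \cup V_1 \cup \cdots \cup V_{k'}$ in which $|V_0'| \le \varepsilon_0 n$, all clusters $V_i$ share a common size $m$, and at most $\varepsilon_0 (k')^2$ pairs $(V_i, V_j)$ are $\varepsilon_0$-irregular (hence a fortiori not $\varepsilon$-irregular).

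Next I would perform the cleanup. Call a cluster $V_i$ \emph{bad} if it forms an irregular pair with more than $\sqrt{\varepsilon_0}\,k'$ other clusters; double counting shows that at most $2\sqrt{\varepsilon_0}\,k'$ clusters are bad. Moving every bad cluster into the exceptional class yields a new partition $V_0 \cup V_1 \cup \cdots \cup V_k$ with
\[
|V_0| \le |V_0'| + 2\sqrt{\varepsilon_0}\,k' m \le (\varepsilon_0 + 2\sqrt{\varepsilon_0})\,n \le \varepsilon n
\]
by the choice of $\varepsilon_0$, while $k_0 \le k \le k'$. Define the $(\varepsilon,d)$-reduced graph $R$ on $[k]$ as in the statement; by construction each surviving cluster $V_i$ is incident to at most $\sqrt{\varepsilon_0}\,k \le \tfrac{\varepsilon}{3}k$ irregular pairs.

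To establish the minimum degree bound on $R$, fix any $i \in [k]$ and partition the indices $j \neq i$ into $D_i = N_R(i)$ (pairs $(V_i,V_j)$ that are $(\varepsilon,d)$-regular), $L_i$ ($\varepsilon$-regular pairs of density $<d$), and $I_i$ ($\varepsilon$-irregular pairs). Summing $d_G(v)$ over $v \in V_i$ and splitting the edges by the location of their other endpoint gives
\[
\delta n m \le \sum_{v \in V_i} d_G(v) \le m^2 + m|V_0| + |D_i|\,m^2 + d\,k m^2 + |I_i|\,m^2,
\]
using the trivial density upper bounds $d(V_i,V_j) \le 1$ on $D_i$ and $I_i$ pairs and $d(V_i,V_j) < d$ on $L_i$ pairs. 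Dividing through by $m$, invoking $km \le n$, $|V_0| \le \varepsilon n$, $|I_i| \le \tfrac{\varepsilon}{3}k$, and absorbing the $O(1/k_0)$ slack coming from the $m^2$ and $m|V_0|$ terms by choosing $k_0$ sufficiently large, one obtains $|D_i| \ge (\delta - d - \varepsilon)\,k$, as required.

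The only genuine subtlety is converting a \emph{global} bound on the number of irregular pairs into a \emph{per-cluster} one: the absorption of bad clusters into $V_0$ is the device that achieves this, and the choice of $\varepsilon_0$ quadratically small in $\varepsilon$ keeps the resulting enlargement of $V_0$ within the target $\varepsilon n$. The remaining calculation is a routine edge count against the minimum-degree hypothesis.
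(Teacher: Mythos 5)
The paper only cites this theorem from Komlós–Simonovits and never proves it, so any valid derivation would count as new content here. Your route—apply the classical Szemerédi Regularity Lemma at a smaller scale $\varepsilon_0$, prune into $V_0$ the ``bad'' clusters that lie in too many irregular pairs so that a global bound on irregular pairs becomes a per-cluster bound, and then run an edge count against the minimum-degree hypothesis—is exactly the standard way to deduce the degree form, and the overall structure is sound.

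There is, however, one genuine slip in the final bookkeeping. After pruning you state only the coarse bound $|V_0|\le\varepsilon n$ and then use it, together with $|I_i|\le\tfrac{\varepsilon}{3}k$, to conclude $|D_i|\ge(\delta-d-\varepsilon)k$. Dividing your edge count by $m^2$ gives
\[
\delta k \;\le\; \delta\,\frac{n}{m} \;\le\; 1 \;+\; \frac{|V_0|}{m} \;+\; |D_i| \;+\; d\,k \;+\; |I_i|,
\]
and the term $|V_0|/m$ equals $|V_0|\,k/(n-|V_0|)\approx\varepsilon k/(1-\varepsilon)\gtrsim\varepsilon k$ when you only know $|V_0|\le\varepsilon n$. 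Adding the $\varepsilon k/3$ from irregular pairs and the $O(1/k_0)$ from the within-cluster term, the total slack beyond $d$ is roughly $4\varepsilon/3\,k$, not $\varepsilon k$—so written as stated, the conclusion does not follow. The fix is already implicit in your own choice $\varepsilon_0\le\varepsilon^2/16$: then
\[
|V_0|\;\le\;\bigl(\varepsilon_0+2\sqrt{\varepsilon_0}\bigr)n\;\le\;\Bigl(\tfrac{\varepsilon^2}{16}+\tfrac{\varepsilon}{2}\Bigr)n\;\approx\;\tfrac{\varepsilon}{2}\,n,
\]
so the slack from $V_0$ is in fact $\approx\varepsilon k/2$, and $\varepsilon/2+\varepsilon/3+O(1/k_0)<\varepsilon$ does close the argument (for $\varepsilon$ small and $k_0$ large). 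You should carry the sharper bound on $|V_0|$ into the slack count rather than rounding it up to $\varepsilon n$ beforehand. Two smaller points worth making explicit: (i) after removing up to $2\sqrt{\varepsilon_0}\,k'$ clusters you must check $k\ge k_0$, which just requires starting the classical lemma with $k_0'\ge 2k_0$; and (ii) for the per-cluster irregularity bound you need $\sqrt{\varepsilon_0}\,k'\le\tfrac{\varepsilon}{3}k$, which uses $k\ge(1-2\sqrt{\varepsilon_0})k'$—fine for small $\varepsilon_0$, but worth stating.
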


\begin{lemma}[\cite{allen2013chromatic}]\label{lmm:lmm5.3}
Let $H$ be an $r$-near-acyclic-graph and $\varepsilon,\beta,d>0$ be reals. Let $G$ be a graph, and $\chi(G[U])\ge C$ for some sufficiently large constant $C=C(H,\varepsilon,\beta,d)$. 
Let $U$, $X$ and $Y_1, \ldots, Y_{r-3}$ be pairwise disjoint subsets of $V(G)$, with $|X|=|Y_j|$ for each $j \in[r-3]$. Suppose that $(X, Y_j)$ and $(Y_i, Y_j)$ are $(\varepsilon, d)$-regular for each $i \neq j$, and that
\begin{align*}
|N(u) \cap X| \geq \beta|X| \quad \text{and} \quad |N(u) \cap Y_j| \geq (1/2+\beta)|Y_j|
\end{align*}
for every $u \in U$ and $j \in[r-3]$. Then $H\subseteq G$.
\end{lemma}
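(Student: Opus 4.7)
The plan is to build an embedding $\phi\colon V(H)\hookrightarrow V(G)$ guided by the $r$-near-acyclic decomposition of $H$. Since $H$ is $r$-near-acyclic, I would fix independent sets $I_1,\ldots,I_{r-3}$ of $H$ whose deletion yields a near-acyclic graph $H'=H-\bigcup_j I_j$, together with a near-acyclic decomposition $V(H')=V(F)\cup S$: here $F=H'[V(F)]$ is a forest with a proper $2$-colouring $V(F)=A\cup B$, the set $S$ is independent in $H'$, and every odd cycle of $H'$ meets $S$ in at least two vertices. The target embedding should satisfy $\phi(I_j)\subseteq Y_j$ for each $j\in[r-3]$, $\phi(A)\subseteq X$, and $\phi(B\cup S)\subseteq U$. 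Under this template the edges of $H$ split into three families that are matched to the available structure in $G$: (i) edges incident to some $I_j$, to be realised via the regularity of the pairs $(X,Y_j)$ and $(Y_i,Y_j)$; (ii) edges from $A$ to $B\cup S$, realised as $X$--$U$ edges via $|N(u)\cap X|\ge\beta|X|$; and (iii) edges from $B$ to $S$, which form a bipartite subgraph of $H'$ and must be realised inside $G[U]$.

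The heart of the argument is the embedding of $B\cup S$ inside $G[U]$. Since $H'[B\cup S]$ is bipartite of bounded size and $\chi(G[U])\ge C$ with $C$ large, a copy of $H'[B\cup S]$ can be located in $G[U]$ by passing to a colour-critical subgraph, which has minimum degree at least $C-1$, and applying a dense-bipartite embedding argument such as K\H{o}v\'ari--S\'os--Tur\'an. The subtle point is that the chosen images of the $S$- and $B$-vertices in $U$ must collectively have many common candidates in $X$, so that the forest can subsequently be completed by sending $A$ into $X$. I would arrange this by first passing to a sub-configuration $U^{\ast}\subseteq U$, $X^{\ast}\subseteq X$, $Y_j^{\ast}\subseteq Y_j$ whose sizes are a positive fraction of the originals, in which the vertices of $U^{\ast}$ share a common dense neighbourhood profile into $X^{\ast}$ (via a sunflower/Ramsey argument) while the pairs $(X^{\ast},Y_j^{\ast})$ and $(Y_i^{\ast},Y_j^{\ast})$ remain $(2\varepsilon,d/2)$-regular; crucially, only a vanishing fraction of $U$ need be discarded, so that $\chi(G[U^{\ast}])\ge C/2$ is preserved.

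Once the image of $B\cup S$ is fixed inside $U^{\ast}$, the rest of the embedding is routine. For each $a\in A$, the set of candidate images in $X^{\ast}$ is the intersection of at most $|V(H)|$ sets each of density at least $\beta$ in $X^{\ast}$; the common-neighbourhood property guarantees this intersection is non-empty, allowing the greedy extension through all of $A$. Finally, for each $v\in\bigcup_j I_j$, the bounded number of already-embedded neighbours of $v$ have images in $X^{\ast}\cup U^{\ast}\cup\bigcup_{k\ne j}Y_k^{\ast}$, each carrying sufficient density into $Y_j^{\ast}$ (by regularity of $(X^{\ast},Y_j^{\ast})$ and $(Y_i^{\ast},Y_j^{\ast})$, and by the $(1/2+\beta)$-density hypothesis for $U^{\ast}$). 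A Counting-Lemma application then produces a valid image inside $Y_j^{\ast}$, and iterating yields the full embedding.

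The main obstacle is the extraction step of passing to a common-neighbourhood sub-configuration $(U^{\ast},X^{\ast},Y^{\ast})$ without destroying the large chromatic number of $G[U]$. This is precisely where the near-acyclic hypothesis becomes essential: the odd-cycle condition on $S$ confines the non-bipartite part of $H'$ to the interface between $S$ and the forest, so the high chromatic number of $G[U]$ is invoked only to embed this \emph{odd-cycle skeleton}, rather than any larger non-bipartite scaffolding of $H$. In this sense the argument mirrors, and extends to the multi-class regular setting of the $Y_j$'s, the structural result of {\L}uczak and Thomass\'e that $\delta_\chi(H)=0$ for near-acyclic $H$.
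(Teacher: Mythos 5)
The paper does not prove this lemma; it cites it from \cite{allen2013chromatic}. Your high-level template---splitting $H$ via its $r$-near-acyclic decomposition, routing the $r-3$ independent sets through the regular $Y_j$'s via the counting lemma, and routing the near-acyclic remainder $H'=F\cup S$ through $X$ and $U$---is indeed the right framework and matches the spirit of the original proof. However, the execution has a load-bearing gap.

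The crux is your step ``embed $B\cup S$ into $U$, realising the bipartite graph $H'[B\cup S]$ inside $G[U]$ by passing to a colour-critical subgraph of minimum degree $\ge C-1$ and applying K\H{o}v\'ari--S\'os--Tur\'an.'' This step is not sound. Minimum degree $C-1$ in a graph $G'\subseteq G[U]$ does not give any lower bound on the edge \emph{density} of $G'$ (which could be $(C-1)/|V(G')|\to 0$), so K\H{o}v\'ari--S\'os--Tur\'an does not apply. More fundamentally, high chromatic number simply does \emph{not} force the presence of an arbitrary bounded bipartite graph: by Erd\H{o}s's theorem, $G[U]$ can have arbitrarily large chromatic number and arbitrarily large girth, in which case the only bounded-size subgraphs of $G[U]$ are forests. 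Indeed, the extremal constructions in this very paper (the Borsuk--Hajnal graphs) have $G[U]$ of large girth, so any valid proof of the lemma \emph{must} reduce the $U$-embedding to a forest. Your argument would have to show that $H'[B\cup S]$ is a forest (for a suitable choice of the decomposition and the $2$-colouring $A\cup B$), but you never establish this, and it is not immediate from the definition: the near-acyclic condition constrains \emph{odd} cycles of $H'$, while $H'[B\cup S]$, being bipartite, contains only even cycles. This structural reduction is precisely where the near-acyclic hypothesis must be used, and it is missing.

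Two secondary issues. First, you claim that ``only a vanishing fraction of $U$ need be discarded, so that $\chi(G[U^*])\ge C/2$ is preserved.'' Deleting a vertex decreases the chromatic number by at most one, so deleting a ``vanishing fraction'' of $U$ (which can be huge in absolute terms, since $|U|$ may be linear in $n$) can annihilate a constant chromatic number entirely. The correct mechanism is to \emph{partition} $U$ into a bounded number of neighbourhood-profile types and invoke subadditivity of $\chi$ over the parts. Second, the claim that the images of the already-embedded neighbours of each $a\in A$ have a common candidate in $X^*$ because each is $\beta$-dense into $X$ does not follow: sets of density $\beta$ need not intersect, and even after restricting to a common ``dense'' part of a partition of $X$, intersecting $\Delta(H)$ many $\gamma$-dense subsets can be empty unless $\gamma>1-1/\Delta(H)$, which $\beta$-density does not supply. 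The extraction of a suitable sub-configuration requires a more delicate pigeonhole/regularity argument than the sunflower-type sketch you give.
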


The following lemma provides a standard estimate that relates the number of edges in the original graph $G$ to the number of edges in its reduced graph $R$.

\begin{lemma}
For $\varepsilon>0$ and $d \in(0,1)$, let $G$ be a graph on $n$ vertices with an ($\varepsilon, d$)-reduced graph $R$ on $k$ vertices, where $1/k\le 2\varepsilon$. Then 
  $e(G)\le e(R)\frac{n^2}{k^2}+\left(\frac{d}{2} + 2\varepsilon\right)n^2$.
\end{lemma}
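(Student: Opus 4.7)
The plan is a routine bookkeeping argument: partition the edges of $G$ according to where their endpoints land in the regular partition, and upper bound each group separately.

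Fix the $\varepsilon$-regular partition $V_0\cup V_1\cup\cdots\cup V_k$ of $V(G)$ underlying the reduced graph $R$, with $|V_0|\le\varepsilon n$ and $|V_1|=\cdots=|V_k|=m\le n/k$. I would split the edges of $G$ into five disjoint classes:
\begin{enumerate}
    \item[\rm (a)] edges with at least one endpoint in $V_0$;
    \item[\rm (b)] edges lying inside some cluster $V_i$, $i\in[k]$;
    \item[\rm (c)] edges between pairs $(V_i,V_j)$ that fail to be $\varepsilon$-regular;
    \item[\rm (d)] edges between $\varepsilon$-regular pairs $(V_i,V_j)$ of density strictly less than $d$;
    \item[\rm (e)] edges between $(\varepsilon,d)$-regular pairs, i.e., the pairs that correspond to edges of $R$.
\end{enumerate}

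Next I would bound each class using elementary estimates. For (a), the count is at most $|V_0|\cdot n\le\varepsilon n^2$. For (b), using $km\le n$, the total is at most $k\binom{m}{2}\le n^2/(2k)\le\varepsilon n^2$, where the final inequality invokes the hypothesis $1/k\le 2\varepsilon$. For (c), the definition of an $\varepsilon$-regular partition gives at most $\varepsilon k^2$ such pairs, each contributing at most $m^2$ edges, so the contribution is at most $\varepsilon k^2 m^2\le\varepsilon n^2$ (which can be halved if one counts unordered pairs, accounting for the sharper constant in the statement). For (d), each such pair contributes fewer than $d\,m^2$ edges, and there are at most $\binom{k}{2}$ pairs in total, giving an upper bound of $\tfrac{1}{2}d(km)^2\le \tfrac{d}{2}n^2$. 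For (e), each edge $ij\in E(R)$ corresponds to at most $m^2\le n^2/k^2$ edges of $G$, yielding at most $e(R)\,n^2/k^2$.

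Summing the five contributions gives
\[
e(G)\le e(R)\frac{n^2}{k^2}+\frac{d}{2}n^2+O(\varepsilon)\,n^2,
\]
and a careful choice of which pairs to count as ordered versus unordered absorbs the constants into the claimed $2\varepsilon$ slack. There is no real obstacle here; the only point demanding a little care is the bookkeeping for class (c), where one must use the precise form of the defining inequality of an $\varepsilon$-regular partition, together with the assumption $1/k\le 2\varepsilon$ to dispose of the intra-cluster edges in class (b).
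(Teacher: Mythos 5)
Your proposal is correct and follows essentially the same approach as the paper: classify the edges of $G$ by where their endpoints fall in the regular partition, bound each class with elementary estimates, and sum. The only difference is that you account separately for edges meeting the exceptional set $V_0$ (a class the paper's write-up does not mention), which is a bit more careful but makes the final $2\varepsilon$ slack slightly tight; the constant can always be absorbed as you indicate, and this has no effect on how the lemma is used downstream.
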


\begin{proof}
 The edges of $G$ can be categorized into the following four types:
\begin{itemize}
\item \textbf{Edges within clusters:} The number of edges with both endpoints in $V_i$ is at most $k\binom{n/k}{2}$;

\item \textbf{Edges in irregular pairs:} The number of edges between distinct clusters $V_i$ and $V_j$ such that $(V_i, V_j)$ is not $\varepsilon$-regular is at most $\varepsilon k^2\frac{n^2}{k^2}$;

\item \textbf{Edges in sparse regular pairs:} The number of edges between distinct clusters $V_i$ and $V_j$ where $(V_i,V_j)$ is $\varepsilon$-regular but $d(V_i,V_j)<d$ is at most $d\binom{k}{2}\frac{n^2}{k^2}$;

\item \textbf{Edges in dense regular pairs:} The edges between distinct clusters $V_i$ and $V_j$ such that $(V_i, V_j)$ is $\varepsilon$-regular and $d(V_i, V_j) \ge d$ correspond to the edges of $R$. The number of such edges is at most $e(R) \frac{n^2}{k^2}$.
\end{itemize} 
Summing the upper bounds for these four types yields the desired result.
\end{proof}

\subsection{Zykov symmetrization}

We shall want to apply the \textit{Zykov symmetrization}, defined as follows. 
Given a graph $G$ and two non-adjacent vertices $u, v \in$ $V(G)$, the graph $Z_{u,v}(G)$ is obtained by replacing $u$ with a twin of $v$.
That is, we delete all edges incident to $u$ and add edges between $u$ and the neighbors of $v$ instead. 
Note that the relation of being twins forms an equivalence relation, which partitions the vertex set into \textbf{twin classes}.

Let $G$ be a graph with vertex set $V(G)=[n]$. We define a total order $\prec$ on $[n]$ by letting $i \prec j$ if either $d(i) < d(j)$ or $d(i) = d(j)$ and $i < j$. We refer to $Z_{u,v}$ as an \textbf{increasing Zykov symmetrization} (IZS) if $u \prec v$.

Given a vertex set $A\subseteq V(G)$, we construct $Z(G|A)$ through the following iterative procedure:
\begin{enumerate}
    \item Partition $A$ into twin classes $A(i_1), A(i_2), \ldots, A(i_a)$, indexed by their smallest vertex labels $i_1 < \cdots < i_a$. That is, $i$ is the smallest vertex in $A(i)$.
    \item While some pair of twin classes remains non-adjacent
\begin{enumerate}
\item Among all non-adjacent class pairs $(A(k), A(j))$ with $k<j$, select the pair minimizing $k+j$.
\item If $j \prec k$, merge $A(j)$ into $A(k)$. Otherwise, merge $A(k)$ into $A(j)$.
\item Update labels to maintain the ordering $i_1<i_2<\cdots<i_{a'}$ for the remaining classes. 
\end{enumerate}
    \item The process terminates when every pair of twin classes induces a complete bipartite graph. The resulting graph is $Z(G|A)$.
\end{enumerate}

To ensure the uniqueness of the symmetrization process, we will throughout this chapter assume that any graph $G$ with $n$ vertices has vertex set $[n]$.

Observe that $\omega(Z_{u,v}(G))\le \omega(G-u)$ and $\chi(Z_{u,v}(G))\le \chi(G-u)$. The following proposition follows readily from the construction.

\begin{proposition}\label{prop:zykov}
    Let $G$ be a graph, and let $A \subseteq V(G)$. Denote $Z := Z(G|A)$.
    Then
\begin{itemize}
    \item[\rm (1)] $e(G)\le e(Z)$;
    \item[\rm (2)] $\omega(Z)\le \omega(G)\text{ and }\omega(Z[A])\le \omega(G[A])$;
    \item[\rm (3)] every pair of non-adjacent vertices in $A$ has the same neighborhood.
\end{itemize}
\end{proposition}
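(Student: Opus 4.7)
The plan is to reduce the statement to its atomic operation: a single increasing Zykov symmetrization. The construction of $Z(G|A)$ proceeds by iterated merges of twin classes, and each merge (say of $A(j)$ into $A(k)$ with $j\prec k$) can itself be decomposed into a sequence of single IZS steps $Z_{u,v}$ where $u$ ranges over the vertices of $A(j)$ and $v$ is a fixed representative of $A(k)$. A quick sanity check is needed here: at each intermediate step the next $u\in A(j)$ still satisfies $u\prec v$ and $uv\notin E$, because the vertices already processed have been absorbed into $A(k)$ and the remaining members of $A(j)$ retain their original neighborhood, which does not meet $\{v\}$. Granting this decomposition, it suffices to verify that properties (1) and (2) are preserved by a single IZS step $Z_{u,v}$ with $u\prec v$ and $u,v\in A$; property (3) will then follow from the termination condition of the algorithm.

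For (1), I would compare $G$ and $Z_{u,v}(G)$: only edges incident to $u$ change. The $d_G(u)$ edges $uw$ are deleted and the $d_G(v)$ new edges $uw'$ with $w'\in N_G(v)$ are added. Since $uv\notin E(G)$, the vertex $v$ is not among the new neighbors of $u$, so no multi-edges or loops arise. The net change in edge count is $d_G(v)-d_G(u)\ge 0$ by the choice $u\prec v$, so $e(Z_{u,v}(G))\ge e(G)$. Iterating over all IZS steps gives $e(Z)\ge e(G)$.

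For (2), suppose $K$ is a clique in $Z_{u,v}(G)$. If $u\notin K$, then $K$ is also a clique in $G$ (since only $u$'s edges changed). If $u\in K$, then $v\notin K$ because $u,v$ are non-adjacent, and replacing $u$ by $v$ in $K$ yields a set of equal size in which every pair is adjacent in $G$: indeed $K\setminus\{u\}\subseteq N_{Z_{u,v}(G)}(u)=N_G(v)$, and the edges inside $K\setminus\{u\}$ are unchanged. So $\omega(Z_{u,v}(G))\le\omega(G)$. The identical argument carried out inside the induced subgraph on $A$, using the fact that every IZS step performed by the algorithm has both endpoints $u,v\in A$, yields $\omega(Z_{u,v}(G)[A])\le\omega(G[A])$, and iteration gives both inequalities in (2).

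For (3), the while-loop terminates exactly when no two distinct twin classes are non-adjacent; equivalently, in $Z$ every pair of non-adjacent vertices of $A$ lies in a common twin class and therefore shares the same neighborhood. The only genuine subtlety in the whole argument is the decomposition of a class merge into individual IZS steps described above, and this is routine; no real obstacle is expected.
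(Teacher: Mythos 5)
The paper gives no proof of this proposition, merely remarking that it ``follows readily from the construction,'' so there is no author argument to compare against; your blind write-up is, in effect, the missing proof, and it is correct. The decomposition into atomic IZS steps and the per-step verification of (1) via the degree balance $d(v)-d(u)\ge 0$, of (2) via the clique swap $K\mapsto (K\setminus\{u\})\cup\{v\}$ (preserved inside $A$ because both $u,v\in A$), and of (3) via the termination condition of the while-loop are exactly the checks one needs.

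One small imprecision worth noting: in your sanity check you assert that each unprocessed $u\in A(j)$ ``still satisfies $u\prec v$,'' but when $d(j)=d(k)$ and $j<k$, a non-label vertex $u\in A(j)$ could have a larger index than $v$ and fail the tie-break, so $u\prec v$ need not hold. This does not damage the argument, because the only thing (1) actually requires is $d(u)\le d(v)$, which is preserved throughout a merge (the surviving class and the unprocessed part of the absorbed class are non-adjacent and each independent, so neither side's degrees change during the vertex-by-vertex decomposition); when degrees are tied the edge count is unchanged anyway. Phrasing the invariant in terms of $d(u)\le d(v)$ rather than $u\prec v$ closes this cosmetic gap. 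Also implicit but worth stating is the identity $Z_{u,v}(G)[A]=Z_{u,v}(G[A])$ for $u,v\in A$, which is what lets the clique argument transfer to the induced subgraph on $A$. With those two points made explicit, the proof is complete.
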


We define an $r$-clique $K$ as an \textit{$(X,Y,r)$-clique} if $|V(K)\cap X|=1$ and $|V(K)\cap Y|=r-1$. A graph $G$ is said to be \textbf{$(X,Y,r)$-free} if it contains no $(X,Y,r)$-cliques. The following proposition shows that the $(X,Y,r)$-free property is preserved under Zykov symmetrization.

\begin{proposition}\label{62}
    Let $X$, $Y$ be two disjoint sets of $V(G)$. If $G$ is $(X,Y,r)$-free, then the Zykov symmetrization $Z(G|X)$ and $Z(G|Y)$ both  preserve the $(X,Y,r)$-free property.
\end{proposition}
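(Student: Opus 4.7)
The plan is to reduce to a single elementary Zykov step: by construction, $Z(G|X)$ and $Z(G|Y)$ are obtained by iteratively applying operations $Z_{u,v}$ on non-adjacent pairs $u,v$ drawn from $X$ (respectively from $Y$). Hence it suffices to show that a single such step preserves $(X,Y,r)$-freeness; the full claim then follows by induction on the number of merge operations in the definition of $Z(G|\cdot)$.

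For the elementary step, I would argue as follows. Fix non-adjacent $u,v$ belonging to the same side ($X$ or $Y$) and suppose, for contradiction, that $K$ is an $(X,Y,r)$-clique in $Z_{u,v}(G)$. Only edges incident to $u$ have been altered, so if $u\notin K$ then $K$ is already an $(X,Y,r)$-clique in $G$, contradicting the hypothesis. If $u\in K$, then since $Z_{u,v}$ makes $u$ a twin of $v$ (in particular $u$ and $v$ remain non-adjacent), we have $v\notin K$, and every vertex of $K\setminus\{u\}$ is adjacent in $G$ to $v$ (as the new neighborhood of $u$ coincides with $N_G(v)$). Therefore $(K\setminus\{u\})\cup\{v\}$ is an $r$-clique in $G$. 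The final step is to check that this swap preserves the structural split $|V(K)\cap X|=1,\ |V(K)\cap Y|=r-1$: since $u$ and $v$ lie on the same side (both in $X$ when we consider $Z(G|X)$, both in $Y$ when we consider $Z(G|Y)$), replacing $u$ with $v$ keeps the intersection sizes with $X$ and $Y$ unchanged. Thus $G$ contains an $(X,Y,r)$-clique, a contradiction.

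There is no real obstacle here; the only delicate point is the bookkeeping in the case $u\in K$, where one must use the disjointness of $X$ and $Y$ together with the fact that $u,v$ lie on a common side to conclude that the rebuilt clique is of the required $(X,Y,r)$-type. This is precisely the reason the proposition is stated separately for $Z(G|X)$ and $Z(G|Y)$ rather than for symmetrizations across the cut.
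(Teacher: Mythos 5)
Your proof is correct and follows essentially the same route as the paper: reduce to a single elementary Zykov step, note that any new $(X,Y,r)$-clique must contain the replaced vertex $u$, and swap $u$ for its twin $v$ to produce an $(X,Y,r)$-clique in $G$. If anything, your version is slightly more explicit than the paper's in first ruling out the case $u\notin K$ before concluding that the unique $X$-vertex (or a $Y$-vertex) of $K$ must be $u$.
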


\begin{proof}
 We prove that a single symmetrization operation on a pair of non-adjacent vertices in $X$ or $Y$ preserves the $(X,Y,r)$-free property. Since each such operation only modifies the graph locally without introducing new $(X,Y,r)$-cliques, it follows by induction that any finite sequence of such operations also preserves the property. It thus suffices to prove the claim for a single symmetrization step.

    Let $x,x'\in X$ be two non-adjacent vertices.
    Assume for contradiction that $Z_{x',x}(G)$ contains an $(X,Y,r)$-clique $K$.
    By definition, this implies $V(K) \cap X = \{x'\}$. Since $x'$ and $x$ have identical neighborhoods, replacing $x'$ with $x$ yields an $(X,Y,r)$-clique in $G - x'$, contradicting the $(X,Y,r)$-freeness of $G$.

    Let $y,y'\in Y$ be two non-adjacent vertices.
    Assume for contradiction that $Z_{y',y}(G)$ contains an $(X,Y,r)$-clique $K$, then $K$ must contain $y'$. In this case, we have $V(K) = \{x, y', y_1, \ldots, y_{r-2}\}$, where $x \in X$ and $y_i \in Y$ for all $i \in [r - 2]$. Since $y$ and $y'$ share the same neighborhood, replacing $y'$ with $y$ yields an $(X,Y,r)$-clique in $G - y'$, again contradicting the assumption.
\end{proof}

\section{Auxiliary Lemmas}

The famous Andr{\'a}sfai-Erd\H{o}s-S{\'o}s theorem is a cornerstone in the study of chromatic properties of $K_r$-free graphs with large minimum degree.

\begin{theorem}[Andr{\'a}sfai, Erd\H{o}s and S{\'o}s \cite{andrasfai1974connection}]\label{thm:1974Andrasfai}
    Let $r\geq 3$ and let $G$ be a $K_r$-free graph on $n$ vertices such that $\delta(G)>\frac{3 r-7}{3 r-4} n$. Then $\chi(G) \leq r-1$.
\end{theorem}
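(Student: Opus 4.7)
The plan is to argue by induction on $r$, with the base case $r=3$ carrying the main combinatorial content.

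For the base case, I would show that every triangle-free graph with $\delta(G)>2n/5$ is bipartite, by contradiction: if $\chi(G)\ge 3$, take a shortest odd cycle $C=v_0v_1\cdots v_{2k}$ of length $2k+1\ge 5$. Two structural facts drive the argument, both consequences of the minimality of $C$ together with triangle-freeness. First, $G[V(C)]$ has no chords: a chord $v_iv_j$ would split $C$ into two closed walks whose lengths sum to $2k+3$, and the odd one among them would be a shorter odd cycle. Second, every $u\notin V(C)$ has at most two neighbors on $C$: an analogous parity analysis of the two cycles through $u$ formed by any pair of its neighbors on $C$ forces those neighbors to lie at cycle-distance exactly $2$, and three pairwise distance-$2$ vertices on $C_{2k+1}$ cannot coexist when $2k+1\ge 5$. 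A double count of edges incident to $V(C)$ then yields
\[
(2k+1)\,\delta(G)\le\sum_{v\in V(C)}d_G(v)=2\,e(G[V(C)])+e_G(V(C),V(G)\setminus V(C))\le 2(2k+1)+2(n-2k-1)=2n,
\]
so $\delta(G)\le 2n/(2k+1)\le 2n/5$, contradicting $\delta(G)>2n/5$.

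For the inductive step, let $r\ge 4$ and assume the theorem for $r-1$. Pick $v\in V(G)$ of minimum degree, and set $N:=N(v)$. Then $G[N]$ is $K_{r-1}$-free (else $v$ completes a $K_r$), and $|N(u)\cap N|\ge d(u)+d(v)-n\ge 2\delta(G)-n$ for each $u\in N$. Using $|N|=d(v)=\delta(G)$ and $\delta(G)>\tfrac{3r-7}{3r-4}n$, a short algebraic manipulation gives
\[
2\delta(G)-n>\tfrac{3(r-1)-7}{3(r-1)-4}\,|N|,
\]
so $G[N]$ satisfies the hypotheses at level $r-1$ and, by the induction hypothesis, admits a proper $(r-2)$-coloring $\varphi$.

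The main obstacle is to promote $\varphi$ to a proper $(r-1)$-coloring of $G$. The plan is to assign color $r-1$ to $v$ and to extend $\varphi$ to $V(G)\setminus N[v]$ by giving each $w$ a color missing from $\varphi(N(w)\cap N)$ whenever one exists, and the color $r-1$ otherwise. The crux is to show that the set $A$ of vertices forced to receive color $r-1$ forms, together with $\{v\}$, an independent set: any adjacent pair $w,w'\in A$ would meet every color class of $\varphi$ inside both $N(w)\cap N$ and $N(w')\cap N$, and combining the density bound on $|N(w)\cap N(w')\cap N|$ with an appropriate clique structure inside $G[N]$ would produce a copy of $K_r$ in $G$, contradicting $K_r$-freeness. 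I expect this extension step---and in particular locating the required clique inside the common neighborhood while ensuring the density calculation closes---to be the main technical hurdle of the argument.
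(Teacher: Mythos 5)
The paper cites this result from Andr\'asfai, Erd\H{o}s and S\'os and does not supply its own proof, so I assess your sketch on its own merits. Your base case $r=3$ is correct and is essentially the standard argument: a shortest odd cycle $C$ is chordless, every vertex outside $C$ has at most two neighbours on it (the distance-$2$ constraint plus the impossibility of three pairwise distance-$2$ points on $C_{2k+1}$ for $2k+1\ge 5$), and the double count gives $\delta(G)\le 2n/(2k+1)\le 2n/5$. Your neighbourhood reduction is also sound: $G[N(v)]$ is $K_{r-1}$-free with minimum degree at least $2\delta(G)-n$, and the identity $2\delta(G)-n>\tfrac{3(r-1)-7}{3(r-1)-4}\delta(G) \iff \delta(G)>\tfrac{3r-7}{3r-4}n$ shows the induction hypothesis applies, giving a proper $(r-2)$-colouring $\varphi$ of $G[N(v)]$.

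The extension step is a genuine gap, not merely a technicality, and your sketch of it would fail for two independent reasons. First, even if the overflow set $A\cup\{v\}$ were independent, the colouring you build need not be proper outside $N[v]$: two adjacent vertices $w,w'\notin N[v]\cup A$ are each assigned ``some colour missing from $\varphi$ on their $N$-neighbourhood,'' and nothing prevents both from receiving the same colour $c<r-1$. A rule that assigns colours from restrictions inside $N$ alone ignores all conflicts within $G-N[v]$, which the minimum-degree hypothesis does not control by itself. Second, the independence of $A$ is itself unproven: knowing that $\varphi$ realises all $r-2$ colours on each of $N(w)\cap N$ and $N(w')\cap N$ together with a lower bound on $|N(w)\cap N(w')\cap N|$ does not yield a $K_{r-2}$ inside $N(w)\cap N(w')\cap N$ --- a proper colouring realising all colours on a set need not contain a rainbow clique, so the $K_r$ with $w,w'$ that you hope to exhibit does not materialise. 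The classical route avoids a colouring-extension step entirely: since $\tfrac{3r-7}{3r-4}>\tfrac{r-3}{r-2}$, Tur\'an's theorem forces $K_{r-1}\subseteq G$, and the original argument analyses non-neighbourhoods of a fixed $K_{r-1}$ and reduces to the triangle-free base case, which is a structurally different idea from extending a colouring of a single vertex's neighbourhood.
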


The next two lemmas are generalized Tur\'an-type theorems that provide the core machinery for estimating edge density.

\begin{lemma}\label{lmm:basic}
Let $G$ be a $K_r$-free graph with vertex set $V(G)=A\cup B$, where $|V(G)|=n$ and $|A|=a$. 
If $G[A]$ is $K_t$-free with $|A|\ge \frac{t-1}{r-1}n$, then $e(G)\le e(T_{t-1}(a)\vee T_{r-t}(n-a)).$
\end{lemma}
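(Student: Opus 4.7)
The plan is to reduce the extremal problem to a highly structured configuration via Zykov symmetrization applied separately inside $A$ and inside $B$, and then resolve the resulting constrained edge-maximization problem.

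First, I iteratively apply the increasing Zykov symmetrization $Z_{u,v}$ to every pair of non-adjacent vertices $u \prec v$ lying both in $A$ (respectively both in $B$). Exactly as in the proof of \cref{62}, any putative new $K_r$ (or new $K_t$ inside $A$) in the modified graph must contain $u$; replacing $u$ by $v$ would then exhibit such a clique in the original graph, a contradiction. Hence the operation preserves both the $K_r$-freeness of $G$ and the $K_t$-freeness of $G[A]$, fixes $|A|$ and $|B|$, and does not decrease $e(G)$. After exhaustion, the resulting graph $G^\star$ satisfies: $G^\star[A]$ is complete $s$-partite on parts $A_1,\dots,A_s$ with $s \le t-1$; $G^\star[B]$ is complete $s'$-partite on parts $B_1,\dots,B_{s'}$; and since vertices in the same part share neighborhoods throughout $G^\star$, the $A$--$B$ cross edges are encoded by a set $Q \subseteq [s]\times[s']$, with $(i,j)\in Q$ meaning that $A_i$ is completely joined to $B_j$.

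Writing $a_i := |A_i|$ and $b_j := |B_j|$, the edge count decomposes as
\[
e(G^\star) = \binom{a}{2} - \sum_i \binom{a_i}{2} + \binom{n-a}{2} - \sum_j \binom{b_j}{2} + \sum_{(i,j) \in Q} a_i b_j.
\]
Convexity of $\binom{\cdot}{2}$ shows that for fixed $(s,s')$ the first two sums are minimized by balanced part sizes; $K_r$-freeness of $G^\star$, read off the ``cluster graph'' on $[s]\sqcup[s']$ with $K_s,K_{s'}$ on the two sides and $Q$ as the bipartite part, asserts that $|I|+|J|\le r-1$ for every $I\subseteq[s]$ and $J\subseteq[s']$ with $I\times J \subseteq Q$. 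In particular, the maximum bipartite contribution $a(n-a)$ is attained only when $Q = [s]\times[s']$, which forces $s+s' \le r-1$.

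The core step is the constrained maximization. In the complete-$Q$ regime with $s+s' = r-1$ and balanced parts, $e(G^\star) \le \tfrac{1}{2}(1-1/s)\,a^2 + \tfrac{1}{2}(1-1/(r-1-s))(n-a)^2 + a(n-a) + o(n^2)$; as a function of $s$ this is concave with unconstrained maximum at $s^\star = a(r-1)/n$. The hypothesis $a \ge \tfrac{t-1}{r-1}n$ gives $s^\star \ge t-1$, so the feasible maximum subject to $s \le t-1$ is attained at $s = t-1$, $s' = r-t$, yielding exactly $e(T_{t-1}(a)\vee T_{r-t}(n-a))$. The main obstacle is ruling out the complementary regime in which $Q$ is strictly incomplete but $s+s' > r-1$: a one-step comparison shows that enlarging $s'$ by one and removing a single pair from $Q$ gains at most $\tfrac{(n-a)^2}{2(r-t)(r-t+1)}$ interior edges in $B$ while losing at least $\tfrac{a(n-a)}{(t-1)(r-t+1)}$ bipartite edges, and $a \ge \tfrac{t-1}{r-1}n$ is precisely the inequality that makes the net change non-positive. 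Iterating this comparison (or, equivalently, phrasing the optimization as a weighted Motzkin--Straus problem on the cluster graph with the partition constraint $\sum_{[s]} w = a$) handles every incomplete-$Q$ configuration and completes the bound.
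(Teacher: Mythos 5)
Your approach is genuinely different from the paper's. The paper symmetrizes only inside $A$, then argues by induction on $|B|$: either some vertex of $B$ can be absorbed into $A$ while preserving $K_t$-freeness (reducing $|B|$), or all of $B$ is complete to $A$, at which point Tur\'an's theorem on $G[B]$ finishes. You instead symmetrize inside both $A$ and $B$, reduce to a canonical "blow-up" configuration $(s,s',Q)$ with complete multipartite classes $A_1,\ldots,A_s$ and $B_1,\ldots,B_{s'}$ and cross-pattern $Q\subseteq[s]\times[s']$, and then try to solve the resulting finite-dimensional optimization directly. Your structural reduction is sound: applying $Z(\cdot|A)$ and then $Z(\cdot|B)$ preserves $K_r$-freeness, $K_t$-freeness of $A$, and the twin-class structure on $A$ (the $B$-symmetrizations are constant on $A$-twin-classes because those classes have identical neighborhoods into $B$), so the cross edges do form a blow-up of a bipartite pattern $Q$. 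The complete-$Q$ computation, including the observation that the hypothesis $a\ge\tfrac{t-1}{r-1}n$ is exactly $s^\star=a(r-1)/n\ge t-1$ and therefore pushes the constrained maximum to $s=t-1$, $s'=r-t$, is also correct.

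The gap is in the incomplete-$Q$ regime, which you yourself flag as the main obstacle. Two things are missing. First, your "one-step comparison" is stated only between a specific pair of neighboring configurations: $(s,r-t,\text{complete }Q)$ versus $(s,r-t+1,Q\text{ missing exactly one pair})$, both with balanced part sizes. But the set of admissible $(s,s',Q)$ with $s\le t-1$ and the cluster graph $K_r$-free is much richer: when $s+s'$ exceeds $r$ by more than one, a single missing pair in $Q$ no longer suffices for $K_r$-freeness, so the chain of one-step moves from an arbitrary configuration down to the target one is not described, and the sentence "iterating this comparison handles every incomplete-$Q$ configuration" is asserted, not proved. Second, when $Q$ is incomplete the cross term $\sum_{(i,j)\in Q}a_ib_j$ is no longer constant, so "balanced parts are optimal" does not follow from convexity of $\binom{\cdot}{2}$ alone --- the weight optimization on the cluster graph couples the two sides, and the numbers $\tfrac{(n-a)^2}{2(r-t)(r-t+1)}$ and $\tfrac{a(n-a)}{(t-1)(r-t+1)}$ in your comparison implicitly assume balancing that has not been justified for incomplete $Q$. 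The parenthetical appeal to a "weighted Motzkin--Straus problem with a partition constraint" gestures at a way to make this rigorous but is not an argument; there is no off-the-shelf Motzkin--Straus statement for this bipartitioned, doubly-constrained problem, and you would need to set it up and solve it. As written, the proof does not close. The paper's induction on $|B|$ sidesteps the entire cluster-graph optimization and is the cleaner route; if you want to keep your symmetrize-both-sides framing, you would need to either develop the cluster-graph Lagrangian argument in full or replace the iteration with an explicit induction (for instance, on $s+s'$ with a monotonicity lemma that merges a $B$-part into a compatible $A$-part or another $B$-part).
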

\begin{proof}
 Let $\phi(a) = e(T_{t-1}(a) \vee T_{r-t}(n-a))$. 
 We first observe that $\phi$ is a decreasing function for $a \ge \frac{t-1}{r-1}n$. 
 We now prove \cref{lmm:basic} by induction on $|B|$. 
 If $|B| = 0$, then $V(G)=A$, and the claim follows directly from  Tur\'an's theorem. 
 Assume the lemma holds for all graphs with $|B| \le k$ where $k\le \frac{r-t}{t-1}n-1$. Let $G$ be a graph with $|B| = k+1$. 
By Proposition~\ref{prop:zykov}, we may use $Z(G|A)$ instead of $G$. Hence, by \cref{prop:zykov}, we may assume that $G[A]$ is an $s$-partite graph with $s \le t-1$, where each partition class has identical neighborhoods in $G$. 
 We analyze two subcases based on the structure of $G[A]$:
 \begin{itemize}
     \item If some vertex $z \in B$ is non-adjacent to a partition class of $G[A]$, move $z$ into $A$, forming $A' = A \cup \{z\}$. Since $G[A']$ remains $K_t$-free, the induction hypothesis implies $e(G) \le \phi(a+1) \le \phi(a)$. 
     \item If $s \le t-2$, then arbitrarily relocate any vertex $z \in B$ to $A$, forming $A' = A \cup \{z\}$. Again, $G[A']$ is $K_t$-free, and by induction $e(G) \le \phi(a+1) \le \phi(a)$. 
 \end{itemize}
 If neither subcase applies, then $s = t-1$ and every vertex in $B$ is adjacent to all vertices in $A$.
 Consequently, $G[B]$ must be $K_{r-t+1}$-free.
 Applying  Tur\'an's theorem to $G[B]$, we obtain:
\[e(G) \le e(G[A]) + e(G[B]) + a(n-a) \le e(T_{t-1}(a) \vee T_{r-t}(n-a)),\]
which completes the induction.
\end{proof}

\begin{lemma}\label{lmm:xyz}
 Let $G$ be a $K_r$-free graph on $n$ vertices. 
 Suppose $V(G)$ has a partition $V(G)=X\cup Y\cup Z$ such that 
\begin{itemize}
    \item $G[Y]$ is $K_{r-2}$-free;
    \item $G[X\cup Y]$ is $K_{r-1}$-free;
    \item $\frac{|Y|}{r-3}\ge |X|\ge |Z|$.
\end{itemize} 
Then
\[e(G)\le \frac{(r-4)}{2(r-3)}|Y|^2+|X||Y|+|Y||Z|+|Z||X|.\]
\end{lemma}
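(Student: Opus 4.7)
My approach is to apply Zykov symmetrization to simplify the structure of $G$, then to use \cref{lmm:basic} on $G[X\cup Y]$ and an inductive argument to handle the edges incident to $Z$.

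First, I apply the Zykov symmetrization $Z(G|Y)$. By \cref{prop:zykov}, this does not decrease $e(G)$ and preserves the three clique-freeness assumptions ($G$ is $K_r$-free, $G[X\cup Y]$ is $K_{r-1}$-free, $G[Y]$ is $K_{r-2}$-free). After this step, $G[Y]$ is a complete $s$-partite graph with $s\le r-3$, and in particular $e(G[Y])\le \frac{(r-4)|Y|^2}{2(r-3)}$ by Tur\'an's theorem.

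Next, I bound the edges inside $X\cup Y$ by applying \cref{lmm:basic} to $G[X\cup Y]$ (viewed as a $K_{r-1}$-free graph), taking $A=Y$, $B=X$, $t=r-2$, and ``$r$''$=r-1$ in the lemma. The hypothesis $|Y|/(r-3)\ge |X|$ is exactly $|Y|\ge\frac{t-1}{r-2}\,|X\cup Y|$, so the lemma gives
\[
e(G[X\cup Y])\ \le\ e\bigl(T_{r-3}(|Y|)\vee T_1(|X|)\bigr)\ \le\ \frac{(r-4)}{2(r-3)}|Y|^2+|X||Y|.
\]

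It remains to bound the edges incident to $Z$, namely $e(Z)+e(X,Z)+e(Y,Z)$, by $|X||Z|+|Y||Z|$. Here my plan is to proceed by induction on $|Z|$: apply $Z(G|Z)$ so that $G[Z]$ becomes complete $q$-partite with $q\le r-1$. The base case $|Z|=0$ is immediate from Step 2. When $q=1$, the set $Z$ is independent, $e(Z)=0$, and the claim follows trivially from $e(X,Z)\le|X||Z|$ and $e(Y,Z)\le|Y||Z|$. When $q\ge 2$, I plan to show that a twin class $Z_k$ of $Z$ can be transferred into $X$, reducing to a smaller instance; the validity of this move hinges on the fact that the common neighborhood in $X\cup Y$ of any two adjacent $Z$-vertices from different twin classes is $K_{r-2}$-free (a direct consequence of $K_r$-freeness of $G$), which limits $\omega(N(z)\cap(X\cup Y))$ sufficiently to preserve the $K_{r-1}$-freeness of $G[X'\cup Y]$ after the transfer.

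The main obstacle will be the $q\ge 2$ subcase of Step 3. The naive per-term inequality $e(Z)+e(X,Z)+e(Y,Z)\le |X||Z|+|Y||Z|$ can actually fail in isolation (one can construct $r=4$ examples with $q=2$ where it is violated), so the argument must instead establish the combined inequality, compensating any excess in $Z$-incident edges against a deficit in $e(G[X\cup Y])$. I expect the resolution to use the hypothesis $|X|\ge|Z|$ together with the $K_r$-free restriction on common neighborhoods to quantify the trade-off between $Z$-internal edges and missing bipartite edges between $X$, $Y$, and $Z$, in the spirit of the case analysis in the proof of \cref{lmm:basic}.
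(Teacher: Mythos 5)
Your Steps~1 and~2 are sound: symmetrizing within $Y$ preserves all three clique-freeness hypotheses, and the application of \cref{lmm:basic} to $G[X\cup Y]$ with $A=Y$, $t=r-2$ and ambient clique number $r-1$ correctly gives $e(G[X\cup Y])\le \frac{r-4}{2(r-3)}|Y|^2+|X||Y|$, with the hypothesis $|Y|\ge(r-3)|X|$ being exactly what is needed. The problem is Step~3, which you yourself flag as not carried through, and the remaining plan there does not close the gap. You correctly observe that the per-term inequality $e(Z)+e(X,Z)+e(Y,Z)\le|X||Z|+|Y||Z|$ can fail, so you need to trade off excess in the $Z$-incident edges against a deficit in $e(G[X\cup Y])$; but once you do that, the clean separation between ``bound on $X\cup Y$ via \cref{lmm:basic}'' and ``bound on $Z$-incident edges'' collapses, and nothing in the sketch makes that trade-off quantitative. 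Moreover, the proposed reduction step---merging a twin class of $Z$ into $X$---runs into two obstacles you do not address: (i) enlarging $X$ can violate the standing constraint $|X|\le |Y|/(r-3)$, which you needed in Step~2, and (ii) the fact you cite ($K_r$-freeness controls the \emph{pairwise} common neighborhood in $X\cup Y$ of two adjacent $Z$-vertices from different twin classes) does not bound $\omega(N(z)\cap(X\cup Y))$ for a \emph{single} vertex $z\in Z_k$, which is what you would need so that $G[(X\cup Z_k)\cup Y]$ stays $K_{r-1}$-free.

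The paper's proof avoids all of this by not trying to bound $X\cup Y$ and the $Z$-incident edges separately. It fixes the target polynomial $f(x,y,z)=\frac{r-4}{2(r-3)}y^2+xy+yz+zx$, notes that $f$ is monotone under the moves $(x,y,z)\mapsto(x-1,y+1,z)$ and $(x,y,z)\mapsto(x,y+1,z-1)$ in the relevant range, and inducts on $x+z$ by always moving a vertex \emph{into} $Y$ (from either $X$ or $Z$), which can only loosen the constraint $|X|\le|Y|/(r-3)$. When no legal move remains, it has forced $G[Y]$ complete $(r-3)$-partite, $X$ independent and complete to $Y$, and $X\cup Z$ complete to $Y$ and $K_3$-free; then \cref{lmm:basic} applied to $G[X\cup Z]$ with $A=X$, $t=2$ (using $|X|\ge|Z|$) gives $e(X\cup Z)\le |X||Z|$ and the bound follows. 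If you want to repair your Step~3, you should adopt this ``move into $Y$'' induction rather than trying to push mass from $Z$ into $X$.
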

\begin{proof}
When $|X| = |Z|$, applying \cref{lmm:basic} with $Y = A$ and $t = r - 2$ immediately yields the desired bound. Now we assume that $|X|>|Z|$.

Define $f(x,y,z)=\frac{(r-4)}{2(r-3)}y^2+xy+yz+zx$.  A routine calculation shows that, under the assumption $\frac{y}{r - 3} \ge x > z$, we have $f(x, y, z) \ge f(x - 1, y + 1, z)$ and $f(x, y, z) \ge f(x, y + 1, z - 1)$. In the following, we denote $|X| = x$, $|Y| = y$, and $|Z| = z$ for convenience.

 We now prove \cref{lmm:xyz} by induction on $x+z$. 
 If $x=z=0$, then $V(G)=Y$, and the bound follows directly from  Tur\'an's theorem.
 Assume the lemma holds for all graphs with $x+z \le k$ and $x>z$.
 Let $G$ be a graph with $x+z=k+1$.
 Without loss of generality, we may use $Z(G|Y)$ instead of $G$.
 By \cref{prop:zykov}, we may assume that $G[Y]$ is an $s$-partite graph with $s \le r-3$, where each partition class has identical neighborhoods in $G$. 
 We consider two subcases based on the structure of $G$:
\begin{itemize}
     \item If there exists a vertex $u \in X$ that is non-adjacent to some partition class of $G[Y]$, then move $u$ into $Y$,  forming $X'=X\setminus \{u\}$ and $Y' = Y \cup \{u\}$. Since $G[Y']$ remains $K_{r-2}$-free, the induction hypothesis gives $e(G) \le f(x-1,y+1,z)\le f(x,y,z)$. 
     \item If $s \le r-4$, then arbitrarily move a vertex $u$ into $Y$, forming $X'=X\setminus \{u\}$ and $Y' = Y \cup \{u\}$.
     As $s+1\le r-3$, $G[Y']$ remains $K_{r-2}$-free.
     By induction, $e(G) \le f(x-1,y+1,z)\le f(x,y,z)$. 
 \end{itemize}
 If neither subcase applies, then $s=r-3$, $X$ and $Y$ are completely adjacent.
 Since $G[X\cup Y]$ is $K_{r-1}$-free, $X$ must be an independent set.

If there exists $u \in Z$ that is not adjacent to some partition class of $G[Y]$, then move $u$ to one of these parts and let $Y' = Y \cup \{u\}$, $Z'=Z\setminus \{u\}$,  ensuring that $Y'$ remains $K_{r-2}$-free. Since $X$ is an independent set, $X\cup Y'$ is $(r-2)$-partite, ensuring that $X\cup Y'$ remains $K_{r-1}$-free. Hence, by the induction hypothesis, we have $e(G) \le f(x,y+1,z-1)\le f(x,y,z)$.  

Otherwise, all vertices in $X\cup Z$ are adjacent to all vertices in $Y$, hence $X\cup Z$ must be $K_3$-free. 
Applying \cref{lmm:basic} with $A=X$ and $t=2$, we have $e(X\cup Z)\le xz$. By Tur\'an's theorem, we have $e(Y)\le \frac{(r-4)}{2(r-3)}y^2$.
Therefore,  we have
\[e(G)\le e(Y)+e(X\cup Z)+(x+z)y\le  \frac{(r-4)}{2(r-3)}y^2+xy+yz+zx.\qedhere\]\end{proof}

In order to apply the preceding edge-bounding theorems, we must first analyze the structure of the graph—particularly the reduced graph yielded by the regularity lemma. The following set of lemmas provides the technical tools for this purpose. 
Recall that a \textbf{matching} in a graph $G$ is a set of edges without shared vertices, and
the \textbf{matching number} of $G$ is the size of a largest matching in $G$. 

\begin{lemma}[Theorem 1.3.1 in \cite{lovasz2009matching}]\label{lmm:Hall-cond}
    Suppose that $B(X,Y)$ is a bipartite graph with vertex set $X\cup Y$.
    Let $t:=\max\{|S|-|N(S)|:S\subseteq X\}$.
    Then the matching number of $B$ is $|X|-t$.
\end{lemma}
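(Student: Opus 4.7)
The statement is the classical König--Ore deficiency form of Hall's theorem, so the plan is to reduce it to the usual Hall marriage theorem by a simple padding argument.

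First I would establish the upper bound $\nu(B)\le |X|-t$, where $\nu(B)$ is the matching number. Pick a witness set $S\subseteq X$ with $|S|-|N(S)|=t$. Any matching $M$ can cover at most $|N(S)|$ vertices of $S$ (each matched vertex of $S$ uses a distinct partner in $N(S)$), so at least $|S|-|N(S)|=t$ vertices of $S\subseteq X$ remain unsaturated. Hence $|M|\le |X|-t$.

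For the lower bound $\nu(B)\ge |X|-t$, the plan is to augment $B$ into a bipartite graph that satisfies Hall's condition. Introduce a set $Y'$ of $t$ new vertices, each made adjacent to every vertex of $X$, and let $B'$ denote the resulting bipartite graph on $X\cup(Y\cup Y')$. For any $S\subseteq X$, by definition of $t$ we have $|N_B(S)|\ge |S|-t$, so
\[
|N_{B'}(S)| \;=\; |N_B(S)|+t \;\ge\; (|S|-t)+t \;=\; |S|,
\]
so the Hall condition holds in $B'$. Applying Hall's marriage theorem to $B'$ yields a matching $M'$ saturating $X$. At most $|Y'|=t$ edges of $M'$ are incident to $Y'$, so deleting those edges leaves a matching in the original graph $B$ of size at least $|X|-t$. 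Combining the two bounds gives $\nu(B)=|X|-t$.

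There is no real obstacle here: the only step requiring any thought is verifying the Hall condition in the padded graph, which follows immediately from the definition of $t$. Consequently the proof is essentially a two-line argument once Hall's theorem is invoked, and it may even be worth presenting it without proof as a textbook reference.
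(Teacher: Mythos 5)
The paper states this as a cited textbook lemma (Theorem~1.3.1 in Lov\'asz--Plummer, \emph{Matching Theory}) and does not supply a proof, which is exactly what you suggest at the end of your write-up. Your argument is the standard deficiency-form derivation from Hall's marriage theorem and is correct: the upper bound via a witness set $S$, and the lower bound via padding with $t$ universal vertices on the $Y$-side and trimming. The one implicit point worth flagging is that $t\ge 0$ (take $S=\varnothing$), which is needed so that ``delete at most $t$ edges'' makes sense and so that $|X|-t$ is a meaningful target; this is automatic from the definition but is the kind of detail a referee might ask you to note.
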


\begin{lemma}[see Lemma 9 of \cite{allen2013chromatic}\label{lmm:lmm9}]
    Let $\beta, \delta>0$ and $r, t \in \mathbb{N}$, let $F$ be a forest, and suppose that $H \subseteq F \vee K_{r-2}[t]$. Let $G$ be a graph on $n$ vertices, and $T \subseteq V(G)$.
\begin{enumerate}
    \item[\rm (a)] If $\delta(G) \geq \delta n$ and $ |T| \geq\big(\beta^\frac{1}{r-2} (r-2)+(1-\delta)(r-3)\big) n$, then $G[T]$ contains at least $\beta n^{r-2}$ copies of $K_{r-2}$.
    \item[\rm (b)] If $G[N(x)]$ contains at least $\beta n^{(r-1)|H|}$ copies of $K_{r-1}[|H|]$ for every $x \in T$, then we have either $H \subseteq G$ or $|T| \leq |H| / \beta$.
\end{enumerate}
\end{lemma}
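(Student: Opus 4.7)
The plan is to handle parts (a) and (b) separately, using a greedy clique extension for (a) and a double-counting plus embedding argument for (b).

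For part (a), I would count ordered copies of $K_{r-2}$ inside $T$ by greedy selection. The minimum degree condition implies that each vertex of $G$ excludes at most $(1-\delta)n$ vertices from its neighborhood, so for any $k$-clique $\{v_1,\ldots,v_k\}\subseteq T$, the common neighborhood $\bigcap_{i=1}^{k} N(v_i)\cap T$ has size at least $|T| - k(1-\delta)n$. I would pick $v_1,\ldots,v_{r-2}$ one at a time from this shrinking common neighborhood: at each step $i\in\{1,\ldots,r-2\}$, the number of available choices is at least $|T|-(i-1)(1-\delta)n$, which, by the hypothesis on $|T|$, is at least $\beta^{1/(r-2)}(r-2)n$. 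Multiplying these $r-2$ factors yields at least $\bigl(\beta^{1/(r-2)}(r-2)n\bigr)^{r-2} = \beta(r-2)^{r-2}n^{r-2}$ ordered cliques, and dividing by $(r-2)!$ gives at least $\beta n^{r-2}$ unordered copies of $K_{r-2}$ since $(r-2)^{r-2}\geq (r-2)!$.

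For part (b), I would argue by contradiction: assume $H\not\subseteq G$ and $|T| > |H|/\beta$. Double-count pairs $(x,K)$ with $x\in T$ and $K$ a labeled copy of $K_{r-1}[|H|]$ in $G[N(x)]$. By the hypothesis there are at least $|T|\cdot \beta n^{(r-1)|H|}$ such pairs, and since there are at most $n^{(r-1)|H|}$ labeled copies of $K_{r-1}[|H|]$ in $G$, averaging produces a single $K$ with at least $\beta|T| > |H|$ common neighbors in $T$. Selecting any $|H|$ of these to form $S$ gives a set completely joined to $V(K)$. To finish, I would observe that $H\subseteq F\vee K_{r-2}[t]$ together with $F$ being bipartite yields a proper $r$-coloring of $H$ in which every color class has size at most $|H|$, so $H\subseteq K_r[|H|]$; this $K_r[|H|]$ embeds into $S\cup V(K)$, with $S$ playing the role of one part and the $r-1$ balanced parts of the blow-up the others, yielding $H\subseteq G$ and a contradiction.

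The main obstacle I anticipate is the final embedding in part (b): one must check that the $r$-partite structure on $S\cup V(K)$ indeed accommodates $H$, even though $S$ might contain edges of $G$ and the color classes of $H$ may be unbalanced. The first issue is harmless because all required edges of $H$ run between color classes and therefore correspond to cross-part edges of $S\cup V(K)$ (any extra edges inside $S$ are simply unused); the second is handled by the uniform upper bound $|H|$ on each color class. A finer calibration occurs in part (a), where the precise hypothesis $|T|\geq \bigl(\beta^{1/(r-2)}(r-2)+(1-\delta)(r-3)\bigr)n$ is tuned exactly so that each of the $r-2$ greedy selection steps retains at least $\beta^{1/(r-2)}(r-2)n$ choices.
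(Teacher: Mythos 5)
Your proof is correct and uses the natural approach: for (a), a greedy common-neighborhood count (noting that distinctness of the chosen vertices is automatic since $v_j\in N(v_i)$ forces $v_j\neq v_i$) with the slack $(r-2)^{r-2}\ge (r-2)!$ absorbing the conversion from ordered to unordered cliques; for (b), double-counting pairs $(x,K)$ and averaging to obtain a blow-up $K$ with more than $|H|$ common neighbors in $T$, then embedding $H\subseteq K_r[|H|]$. The paper itself states this lemma without proof, citing Lemma~9 of Allen et al., and your argument is essentially the same standard one used there.
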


\begin{lemma}[see Lemma 10 of \cite{allen2013chromatic}]\label{lmm:lmm10}
    Let $\beta, \delta>0$ and $r, t \in \mathbb{N}$, let $F$ be a forest, and suppose that $H \subseteq F \vee K_{r-2}[t]$. 
    Let $G$ be an $H$-free graph on $n$ vertices, and let $T \subseteq V(G)$ be such that every edge $x y \in E(G[T])$ is contained in at least $\beta n^{r-2}$ copies of $K_{r}$ in $G$.
    Then 
    $\chi(G[T]) \leq(2|F| / \beta')+1$ for some constant $\beta'=\beta'(\beta,r,t)$
\end{lemma}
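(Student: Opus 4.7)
The plan is a proof by contradiction aimed at invoking \cref{lmm:lmm9}(b). I will assume that $\chi(G[T])>2|F|/\beta'+1$ for a constant $\beta'=\beta'(\beta,r,t)$ to be fixed at the end of the argument, and derive $H\subseteq G$, contradicting the $H$-freeness of $G$.

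\textbf{Step 1 (Dense subgraph).} A classical degeneracy argument says that if $\chi(G[T])>k$ then $G[T]$ contains a subgraph with minimum degree at least $k$. Applying this with $k=2|F|/\beta'$ produces a set $T'\subseteq T$ with $\delta(G[T'])\ge 2|F|/\beta'$, and the many-$K_r$-per-edge hypothesis is inherited to $T'$: every edge of $G[T']$ still lies in at least $\beta n^{r-2}$ copies of $K_r$ in $G$.

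\textbf{Step 2 (Link graphs are rich in $K_{r-1}$'s).} Fix $x\in T'$. For each neighbor $y\in N_{T'}(x)$, the edge $xy$ lies in $\beta n^{r-2}$ copies of $K_r$ in $G$, and each such $K_r$ contributes a distinct copy of $K_{r-2}$ inside $G[N_G(x)\cap N_G(y)]$, hence a distinct copy of $K_{r-1}$ inside $G[N_G(x)]$ passing through $y$. Summing over the $\ge 2|F|/\beta'$ choices of $y$ and dividing by the overcount of at most $r-1$, the link graph $G[N_G(x)]$ contains at least $\tfrac{2|F|\beta}{(r-1)\beta'}\,n^{r-2}$ copies of $K_{r-1}$.

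\textbf{Step 3 (Promotion to $K_{r-1}[|H|]$-blow-ups).} To invoke \cref{lmm:lmm9}(b) with target set $T'$ I need $G[N_G(x)]$ to contain at least $\beta_0\,n^{(r-1)|H|}$ copies of $K_{r-1}[|H|]$ for every $x\in T'$, with some $\beta_0=\beta_0(\beta,r,t)>0$ playing the role of $\beta$ in \cref{lmm:lmm9}. The idea is a bootstrap: the edges of $G[T'\cap N_G(x)]$ themselves inherit the many-$K_r$ hypothesis with the original constant $\beta$, so the counting of Step~2 can be iterated one level deeper inside $G[N_G(x)]$. Combined with a standard convexity/supersaturation step, this converts the abundance of $K_{r-1}$'s at every scale into the density of $K_{r-1}[|H|]$-blow-ups required. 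The parameter $|F|$ is absorbed into the minimum-degree gain of Step~1, so $\beta_0$ depends only on $\beta,r,t$ (using $|H|\le |F|+(r-2)t$).

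\textbf{Step 4 (Contradiction).} With the hypothesis of \cref{lmm:lmm9}(b) verified on $T'$, one of two conclusions holds: either $H\subseteq G$ — impossible by $H$-freeness — or $|T'|\le |H|/\beta_0\le (|F|+(r-2)t)/\beta_0$. On the other hand $|T'|>\delta(G[T'])\ge 2|F|/\beta'$, so it suffices to choose $\beta'$ small enough (as a function of $\beta,r,t$ only) to ensure $2|F|/\beta'>(|F|+(r-2)t)/\beta_0$ for every $|F|\ge 1$, which is readily possible since the $|F|$-dependence is linear on both sides and $(r-2)t/\beta_0$ is a constant. This delivers the contradiction and proves the lemma.

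\textbf{Main obstacle.} The delicate point is Step~3. The bound obtained in Step~2 is of order $n^{r-2}$, a vanishing $\Theta(1/n)$-fraction of the maximum possible $K_{r-1}$-count in $G[N_G(x)]$, so a purely density-based blow-up supersaturation is insufficient. The crucial observation is that the $K_{r-1}$'s produced in Step~2 themselves sit on edges of $G[T'\cap N_G(x)]$ whose own many-$K_r$ hypothesis allows the counting to be iterated; making this bootstrap quantitative, and keeping the final constant $\beta_0$ independent of $|F|$ so that $\beta'$ depends only on $\beta,r,t$, is where the bulk of the technical work lies.
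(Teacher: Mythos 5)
The paper does not prove this lemma: it is imported verbatim (``see Lemma~10 of \cite{allen2013chromatic}'') and used as a black box, so there is no in-paper argument to compare yours against. Evaluating the proposal on its own terms, it contains a genuine gap at Step~3, which you yourself flag but do not close.

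The issue is quantitative and structural. In Step~2 you obtain roughly $\tfrac{|F|\beta}{(r-1)\beta'}\,n^{r-2}$ copies of $K_{r-1}$ inside $G[N_G(x)]$. As you observe, this is a $\Theta(1/n)$ fraction of the trivial upper bound $\binom{n}{r-1}$, so it is far below the regime in which supersaturation yields blow-ups: the hypothesis of Lemma~\ref{lmm:lmm9}(b) requires $\beta_0 n^{(r-1)|H|}$ copies of $K_{r-1}[|H|]$, i.e.\ a positive \emph{density} of $K_{r-1}$'s, whereas your count is off by a full factor of $n$. The proposed ``bootstrap'' does not repair this: the $K_{r-2}$'s you extract from the $K_r$'s through an edge $xy$ live in $N_G(x)\cap N_G(y)$, and nothing forces these vertices to lie in $T$. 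The hypothesis ``every edge of $G[T]$ lies in $\ge\beta n^{r-2}$ copies of $K_r$'' therefore does not propagate to the edges of those $K_{r-2}$'s, so you cannot iterate the count one level deeper as Step~3 proposes. The contrapositive route through Lemma~\ref{lmm:lmm9}(b) seems to be the wrong target altogether under the given per-edge hypothesis.

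A workable quantity in this setting is of a different shape: for a \emph{single fixed edge} $xy\in E(G[T])$, the common neighbourhood $N_G(x)\cap N_G(y)$ contains $\ge\beta n^{r-2}$ copies of $K_{r-2}$, and that \emph{is} a positive density (out of at most $\binom{n}{r-2}$), so supersaturation does give $\Omega_{\beta,r,t}(n^{(r-2)t})$ copies of $K_{r-2}[t]$ sitting in a single bipartite neighbourhood. The remaining work is then to locate a copy of the forest $F$ in $G[T]$ whose vertices share a common $K_{r-2}[t]$ in this sense--which needs a separate combinatorial argument (not covered by your Steps~3--4) and is where the dependence $\beta'=\beta'(\beta,r,t)$ and the factor $2|F|$ in the bound actually come from. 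In short: Steps~1, 2, and 4 are sound as far as they go, but Step~3 is not merely ``technical''; the count you have is too small by a factor of $n$, and the suggested iteration fails because the intermediate cliques escape $T$.
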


\section{Proofs of main results}

\subsection{Proof of Theorem \ref{thm:thm1}}

We are now ready to solve the case $\delta_\chi(H)=\frac{2r-5}{2r-3}$. 

  Fix  $1\gg\varepsilon\gg \beta\gg d\gg \varepsilon_0>0$ and a large constant $C=C(H,\varepsilon)$.
 Let $G$ be an $H$-free graph with $\delta(G)\ge (\delta+2\varepsilon) n$.
 Applying the Regularity Lemma to $G$ yields
\begin{itemize}
    \item A partition $V(G)=V_0\cup V_1\cup \cdots \cup V_k$, where $|V_0| \leq \varepsilon_0 n$ and $|V_i|=\frac{n-|V_0|}{k}$ for $i \geq 1$.
    \item An $(\varepsilon_0,d)$-reduced graph $R$ on vertex set $[k]$, with $\delta(R)\geq \big(\delta+\varepsilon\big)k$. 
\end{itemize}

 We now define a refined partition of $V(G)$. For each pair of index sets $Y \subseteq I \subseteq [k]$, let
\begin{align*}
X(I, Y) := \{v \in V(G) :\; 
& i \in I \Leftrightarrow \left|N(v) \cap V_i\right| \geq \beta \left|V_i\right| \\
& \text{ and } i \in Y \Leftrightarrow \left|N(v) \cap V_i\right| \geq \left( 1/2 + \beta \right) \left|V_i\right| \}.
\end{align*}

We will also need the following lemma, which provides a structural consequence for $H$-free graphs.

\begin{lemma}[see Claim 3.3 of \cite{liu-shangguan-wu-xue}]\label{lmm:0-lmm-1/2-lmm}
   Let $F$ be a forest, and suppose that $H \subseteq F+K_{r-2}(t)$. 
   Let $G$ be an $H$-free graph on $n$ vertices and $R$ be the $(\varepsilon,d)$-reduced graph of $G$.
   For $X(I,Y)\neq\varnothing$, there are constants $C_1=C_1(H,\beta)$ and $C_2=C_2(H,\beta)$ such that the following hold:
  \begin{itemize}
      \item [{\rm (i)}] If $R[I_1]$ contains a copy of $K_{r-1}$, then $\chi(G[X(I,Y)])\le C_1$;
      \item [{\rm (ii)}] If $R[I_2]$ contains a copy of $K_{r-2}$, then $\chi(G[X(I,Y)])\le C_2$.
  \end{itemize}
\end{lemma}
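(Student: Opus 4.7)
The plan is to handle (i) and (ii) in parallel, producing in each case enough cliques anchored at vertices (resp.\ edges) of $X(I,Y)$ to invoke one of the two black-box chromatic bounds: \cref{lmm:lmm9}(b) for (i) and \cref{lmm:lmm10} for (ii). In both parts one combines the assumed clique structure in $R$ with the density guarantees built into the definition of $X(I,Y)$, then applies a regularity-based counting (blow-up) lemma to produce many copies of $K_{r-1}[|H|]$ (resp.\ $K_r$) through each vertex (resp.\ edge) of $X(I,Y)$; the $H$-freeness of $G$ then forces the desired bound on $\chi(G[X(I,Y)])$.

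For (i), interpret $I_1$ as indices of $\beta$-dense type (so $I_1\subseteq I$) and fix a copy of $K_{r-1}$ in $R[I_1]$ on clusters $V_{i_1},\dots,V_{i_{r-1}}$. For every $v\in X(I,Y)$, the restricted sets $A_j:=N(v)\cap V_{i_j}$ satisfy $|A_j|\ge\beta|V_{i_j}|$. Since $\beta\gg\varepsilon_0$, each pair $(A_p,A_q)$ inherits $(\varepsilon_0/\beta)$-regularity with density at least $d-\varepsilon_0$, so a standard counting lemma yields at least $\beta' n^{(r-1)|H|}$ copies of $K_{r-1}[|H|]$ inside $G[N(v)]$, where $\beta'=\beta'(\beta,d,r,|H|)>0$. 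Applying \cref{lmm:lmm9}(b) with $T=X(I,Y)$, the $H$-freeness of $G$ forces $|X(I,Y)|\le|H|/\beta'$, and hence $\chi(G[X(I,Y)])\le|H|/\beta'=:C_1$.

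For (ii), interpret $I_2$ as indices of $(1/2+\beta)$-dense type (so $I_2\subseteq Y$) and fix a copy of $K_{r-2}$ in $R[I_2]$ on clusters $V_{j_1},\dots,V_{j_{r-2}}$. For every edge $xy\in E(G[X(I,Y)])$, both $x$ and $y$ satisfy $|N(w)\cap V_{j_l}|\ge(1/2+\beta)|V_{j_l}|$, so inclusion–exclusion gives
\[
|N(x)\cap N(y)\cap V_{j_l}|\ \ge\ 2\beta|V_{j_l}|
\]
for each $l$. By inherited regularity the $(r-2)$-partite system $\bigl(N(x)\cap N(y)\cap V_{j_l}\bigr)_{l=1}^{r-2}$ is $o(1)$-regular with density bounded below, so it contains at least $\beta''n^{r-2}$ copies of $K_{r-2}$, and each such copy extends $\{x,y\}$ to a $K_r$ in $G$. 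Therefore every edge of $G[X(I,Y)]$ lies in at least $\beta''n^{r-2}$ copies of $K_r$, and \cref{lmm:lmm10} applied with $T=X(I,Y)$ produces a constant $C_2=C_2(\beta'',r,t,|F|)$ with $\chi(G[X(I,Y)])\le C_2$.

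The main obstacle is routine parameter bookkeeping: after restricting to $N(v)$ in (i) or to $N(x)\cap N(y)$ in (ii), one must verify that the regularity degrades only harmlessly so that the polynomial counting bound still applies. This is precisely what the opening hierarchy $\varepsilon\gg\beta\gg d\gg\varepsilon_0$ buys — both $\beta$ and $2\beta$ are far larger than $\varepsilon_0/\beta$, which is exactly the regime in which the inherited-regularity plus blow-up counting machinery used in \cref{lmm:lmm9} and \cref{lmm:lmm10} operates cleanly.
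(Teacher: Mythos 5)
The paper does not provide its own proof of this lemma---it is cited verbatim from Claim~3.3 of \cite{liu-shangguan-wu-xue}---so I can only assess your argument on its own merits and against the standard route suggested by the ingredients available in the paper. You correctly disambiguate the typo $I_1,I_2$ as $I,Y$, and your two-pronged argument is sound: in (i) you restrict a $K_{r-1}$ in $R[I]$ to $N(v)$, use the $\beta$-density threshold built into $X(I,Y)$ together with inherited regularity to produce $\Omega(n^{(r-1)|H|})$ copies of $K_{r-1}[|H|]$ in $G[N(v)]$ for every $v\in X(I,Y)$, and then \cref{lmm:lmm9}(b) plus $H$-freeness gives a uniform bound on $|X(I,Y)|$ and hence on its chromatic number; in (ii) you restrict a $K_{r-2}$ in $R[Y]$ to $N(x)\cap N(y)$, the $(1/2+\beta)$-density threshold plus inclusion--exclusion gives a $2\beta$-fraction of each cluster, the counting lemma puts $\Omega(n^{r-2})$ copies of $K_r$ through every edge of $G[X(I,Y)]$, and \cref{lmm:lmm10} closes the argument. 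The hierarchy $\varepsilon_0\ll d\ll\beta$ is exactly what makes the inherited-regularity counting step work; you flag this correctly. This is precisely the standard regularity argument that \cref{lmm:lmm9} and \cref{lmm:lmm10} are designed to package, and it is the natural (and almost certainly the intended) route for the cited claim.

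One small point worth making explicit if you wanted a self-contained write-up: in (i) the conclusion of \cref{lmm:lmm9}(b) is a bound on $|T|$ itself, so the chromatic bound $C_1=|H|/\beta'$ is trivially a cardinality bound rather than a colouring argument; in (ii) the constant $C_2$ genuinely comes from a colouring argument in \cref{lmm:lmm10}. This asymmetry is harmless but explains why the two parts land on different constants.
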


\begin{proof}[\textbf{Proof of \cref{thm:thm1}}]
Choose $I,Y$ such that $\chi(G[X(I, Y)])$ is maximized. Then $\chi(G[X(I, Y)]) > C / 4^k$.
 By \cref{lmm:0-lmm-1/2-lmm}, $R[I]$ is $K_{r-1}$-free, and $R[Y]$ is $K_{r-2}$-free. 
  
Now, let $X=I\setminus Y$, and $Z=[k]\setminus I$. Consider a vertex $x\in X(I, Y)$. By the minimum degree condition, we have
\begin{align*}
    (\delta+2\varepsilon) n\le d(x)\le |X|(1/2+d)\frac{n}{k}+|Y|\frac{n}{k}.
\end{align*}
  This implies that $|X|/2+|Y|>\delta k$.

  By the definition of $X(I, Y)$, each vertex $x\in X(I, Y)$ has at most $(\varepsilon_0+d)n$ neighbors outside $V_I$.
  Therefore, $|N(x)\cap V_I|\ge (\delta+2\varepsilon)n-(\varepsilon_0+d)n>(\delta+\varepsilon)n$ as $\varepsilon\gg d\gg \varepsilon_0$.
  We claim that $|I|> ((r-1)\delta-r+3)k$.
  Otherwise, for any edge $xy\in G[X(I,Y)]$, we have 
 $$|N(x)\cap N(y)\cap V_I|\ge 2(\delta+\varepsilon )n-|V_I|\ge (2\varepsilon+(1-\delta)(r-3))n.$$
 Hence, applying \cref{lmm:lmm9} (a) with $\beta=(\frac{2\varepsilon}{r-2})^{r-2}$ and $T=N(x)\cap N(y)$, we deduce that $G[N(x)\cap N(y)]$ contains at least $(\frac{2\varepsilon}{r-2})^{r-2}n^{r-2}$ copies of $K_{r-2}$. By \cref{lmm:lmm10}, this implies that $\chi(G[X(I, Y)])$ is bounded, contradicting our choice of $I,Y$.

We proceed by analyzing the following three cases:

\begin{mycase}{Case 1}
    $|Y|<(r-3)|X|$.
\end{mycase}

In this case, we have $\delta k<|X|/2+|Y|<\frac{2r-5}{2}|X|$, which implies that $|X|>\frac{2\delta }{2r-5}k$.
  Thus
\begin{align*}
    |I|=|X|+|Y|=|X|/2+(|X|/2+|Y|)>\frac{2r-4}{2r-5}\delta k>\frac{r-2}{r-1}k.
\end{align*}

  Since $R$ is $K_r$-free and $R[I]$ is $K_{r-1}$-free, by \cref{lmm:basic}, we have
\begin{align*}
  e(R)\le \frac{r-3}{2(r-2)}|I|^2+|I|(k-|I|)\le \frac{2\delta(r-2)(\delta-\delta r+2r-5)}{(2r-5)^2}k^2.
\end{align*}
Therefore, it suffices to prove that $\frac{2\delta(r-2)(\delta-\delta r+2r-5)}{(2r-5)^2}\le f_1(r,\delta)$. 

When  $\frac{5r-13}{5r-8}<\delta\le\frac{2r-5}{2r-3}$, it suffices to show that 
\[4\delta(r-2)(\delta-\delta r+2r-5)\le (2r-5)^2[(-5r^2+17r-14)\delta^2+(10r^2-44r+46)\delta-5r^2+27r-36]\]
which reduces to
\[[(2r-3)\delta-(2r-5)][(10r^2-39r+38)\delta-(10r^2-49r+60)]\le 0.\]
Since $(2r-3)\delta-(2r-5)\le 0$, it remains to verify that 
\[(10r^2-39r+38)\frac{5r-13}{5r-8}-(10r^2-49r+60)\ge 0 \Longleftrightarrow  5r-14\ge 0.\]

When  $\frac{2r-5}{2r-2}\le\delta\le\frac{5r-13}{5r-8},$  it suffices to show that
\[ 4\delta(r-2)(\delta-\delta r+2r-5)(5r-14)\le (2r-5)^2(-4 (r-1)\delta^2+4 (2 r-5)\delta+r-3)\]
which is equivalent to $((2r-2)\delta-(2r-5))^2\ge 0$.

\begin{mycase}{Case 2}
    $|Y|\ge(r-3)|X|$ and $|X|< |Z|$.
\end{mycase}

In this case, $k=(|X|/2+|Y|)+|X|/2+|Z|\ge \delta k+3|X|/2$, which implies that $|X|<\frac{2(1-\delta)k}{3}$. Therefore, we have \[|Y|\ge \delta k-|X|/2\ge  \frac{(4\delta-1)k}{3}>\frac{r-3}{r-1}k.\]
 Since $R[Y]$ is $K_{r-2}$-free,  applying \cref{lmm:basic} yields
\[e(R)\le \binom{r-3}{2}\left(\frac{|Y|}{r-3}\right)^2+|Y|(k-|Y|)+\frac{(k-|Y|)^2}{4}<\frac{(8-8r)\delta^2+(16r-40) \delta+r-4}{18(r-3)}k^2.\]

Hence, we only need to show that $\frac{(8-8r)\delta^2+(16r-40) \delta+r-4}{18(r-3)}\le f_1(r,\delta)$. 

Note that $\frac{\delta(R[Y])}{|Y|} \ge \frac{\delta k-(k-|Y|)}{|Y|}\ge \frac{7\delta-4}{4\delta-1}$.
On the other hand, by  Tur\'an's theorem, $\frac{\delta(R[Y])}{|Y|}\le \frac{r-4}{r-3}$.
Combining the above inequalities yields $\delta\le \frac{3r-8}{3r-5}<\frac{5r-13}{5r-8}$. 
Thus,  it suffices to show that
\[ [(8-8r)\delta^2+(16r-40) \delta+r-4](5r-14)\le 9(r-3)(-4 (r-1)\delta^2+4 (2 r-5)\delta+r-3),\]
which is equivalent to $((2 r -2)\delta-(2 r-5))^2\ge 0$ and thus clearly holds.

\begin{mycase}{Case 3}
    $|Y|\ge(r-3)|X|$ and $|X|\ge |Z|$.
\end{mycase}
In this case, applying \cref{lmm:xyz} yields \[e(R)\le  \frac{(r-4)}{2(r-3)}|Y|^2+|X||Y|+|Y||Z|+|Z||X|.\]
Hence, letting $x=|X|/n, y=|Y|/n$, we only need to show the following claim.
\begin{claim}\label{prop:Lagrangian}
  Let $\frac{2r-5}{2r-2}\le \delta\le \frac{2r-5}{2r-3}$.
  Define $g(x,y)=x(1-x)+y(1-x-y)+\frac{(r-4)}{2(r-3)}y^2$, where $x,y\in[0,1]$ satisfy $y\le (r-3)(1-\delta)$ and $x/2+y\ge \delta$. Then 
  \[g(x,y)\le f_1(r,\delta).\]
\end{claim}
\begin{poc}
 We analyze two intervals of $\delta$ separately.

\begin{mycase}{First interval}
   $\frac{2r-5}{2r-2}\le \delta\le \frac{5r-13}{5r-8}$.
\end{mycase}

To maximize $g(x,y)$ under the constraint $x/2+y\ge \delta$, define the Lagrangian function: $$L(x, y, \lambda)=x(1-x)+y(1-x-y)+\frac{(r-4)}{2(r-3)}y^2+\lambda(x / 2+y-\delta).$$
Then, we take the partial derivatives of $L$ with respect to $x, y$, and $\lambda$, and set them to zero:
\begin{align*}
& \frac{\partial L}{\partial x}=1-2 x-y+\lambda / 2=0 \\
& \frac{\partial L}{\partial y}=1-x-\frac{r-2}{r-3}y+\lambda=0 \\
& \frac{\partial L}{\partial \lambda}=x / 2+y-\delta=0
\end{align*}
Solving these equations simultaneously gives $x=\frac{2(4\delta-r\delta+r-3)}{5r-14}$ and $y=\frac{(6\delta-1)(r-3)}{5r-14}$. That is, $$g(x,y)\le \frac{-4 (r-1)\delta^2+4 (2 r-5)\delta+r-3}{10r-28}=f_1(r,\delta).$$

\begin{mycase}{Second interval}
   $\frac{5r-13}{5r-8}\le \delta\le \frac{2r-5}{2r-3}$.
\end{mycase}

 First, we compute the partial derivative of $g$ with respect to $x$: $\frac{\partial g}{\partial x}=1-2 x-y$.
  From the constraint $x/2+y\ge \delta$, we deduce $x\ge 2\delta-2y$. Substitute this into $\frac{\partial g}{\partial x}$:
\begin{align*}
    \frac{\partial g}{\partial x}=1-2 x-y \le 1-2(2\delta-2y)-y=1-4\delta +3y.
\end{align*}
  Since $y\le (r-3)(1-\delta)$, we obtain 
\begin{align*}
    1-4\delta +3y\le 1-4\delta +3(r-3)(1-\delta)=3r-8-(3r-5)\delta<0
\end{align*}
  when $\delta\ge \frac{5r-13}{5r-8}> \frac{3r-8}{3r-5}$.
Therefore, $\frac{\partial g}{\partial x} < 0$, which implies that $g(x, y)$ is decreasing in $x$. We may thus set $x = 2\delta - 2y$ and define the function
\begin{align*}
    h(y):=g((2\delta-2y),y)=(2\delta-2y)(1-(2\delta-2y))+y(1-(2\delta-2y)-y)+\frac{(r-4)}{2(r-3)}y^2.
\end{align*}

We now simplify $h(y)$ and compute its derivative:
\begin{align*}
    h'(y)=6\delta-1-\frac{5r-14}{r-3}y.
\end{align*}
The condition $h'(y) \ge 0$ leads to
\begin{align*}
    6\delta-1-\frac{5r-14}{r-3}y\ge 0\Longrightarrow y\le \frac{(6\delta-1)(r-3)}{5r-14}.
\end{align*}
  Since $y\le (r-3)(1-\delta)\le \frac{(6\delta-1)(r-3)}{5r-14}$ for all $\frac{5r-13}{5r-8}\le \delta\le \frac{2r-5}{2r-3}$, we have 
\[g(x,y)\le h(y)\le h((r-3)(1-\delta))=f_1(r,\delta).\qedhere\]
\end{poc}
This concludes the proof of \cref{thm:thm1}.
\end{proof}

\subsection{Proof of Theorem \ref{thm:thm2}}

In this subsection, we consider the case $H$ is $r$-near-acyclic.

\begin{proof}[\textbf{Proof of \cref{thm:thm2}}]
Fix  $1\gg\varepsilon\gg \beta\gg d\gg \varepsilon_0>0$ and a large constant $C=C(H,\varepsilon)$.
 Let $G$ be an $H$-free graph with $\delta(G)\ge (\delta+2\varepsilon) n$.
Applying the Regularity Lemma to $G$ yields
\begin{itemize}
    \item A partition $V(G)=V_0\cup V_1\cup \cdots \cup V_k$, where $|V_0| \leq \varepsilon_0 n$ and $|V_i|=\frac{n-|V_0|}{k}$ for $i \geq 1$.
    \item An $(\varepsilon_0,d)$-reduced graph $R$ on vertex set $[k]$, with $\delta(R)\geq \big(\delta+\varepsilon\big)k$. 
\end{itemize}

For each pair of index sets $Y \subseteq I \subseteq [k]$, let
\begin{align*}
X(I, Y) := \{v \in V(G) :\; 
& i \in I \Leftrightarrow \left|N(v) \cap V_i\right| \geq \beta \left|V_i\right| \\
& \text{ and } i \in Y \Leftrightarrow \left|N(v) \cap V_i\right| \geq \left( 1/2 + \beta \right) \left|V_i\right| \}.
\end{align*}

 Choose $I,Y$ such that $\chi(G[X(I, Y)])$ is maximized. Then $\chi(G[X(I, Y)]) > C / 4^k$.  Let $X=I\setminus Y$. Arguing as in the proof of Theorem~\ref{thm:thm1}, we obtain $|X|/2 + |Y| > \delta k$.
  By \cref{lmm:0-lmm-1/2-lmm} and \cref{lmm:lmm5.3}, the reduced graph $R$ satisfies:
\begin{itemize}
    \item $R[I]$ is $K_{r-1}$-free.
    \item $R[Y]$ is $K_{r-2}$-free.
    \item $R$ is $(X,Y,r-2)$-free.
\end{itemize}

 We apply Zykov symmetrization on $X$ and $Y$ successively to obtain $R'$, then $e(R)\le e(R')$.
 By \cref{prop:zykov}, $R'[X]$ becomes a complete $p$-partite graph with $X=X_1\cup \cdots\cup X_p$ and $p\le r-2$.
 Similarly,  $R'[Y]$ is a complete $q$-partite graph with $Y=Y_1\cup \cdots\cup Y_q$ and $q\le r-3$. By \cref{prop:zykov} and \cref{62}, $R'[I]$ is $K_{r-1}$-free, $R'[Y]$ is $K_{r-2}$-free, and $R'$ is $(X,Y,r-2)$-free.

\begin{claim}\label{cl:r-3-color}
    There is an $(r-3)$-colorable induced subgraph of $R'$ of size at least $|X|/2+|Y|$.
\end{claim}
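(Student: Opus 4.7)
My plan is to reformulate the claim as a bipartite matching problem between the part-indices of $R'[X]$ and an extended palette of $r-3$ colors, with the matching constraints controlled by the $(X,Y,r-2)$- and $K_{r-1}$-freeness of $R'$. Since Zykov symmetrization makes all vertices within $X_i$ (respectively $Y_j$) twins, every pair $(X_i, Y_j)$ with $i \in [p]$ and $j \in [q]$ is either completely adjacent or completely non-adjacent in $R'$; set $J(i) := \{j \in [q] : X_i \text{ is completely non-adjacent to } Y_j\}$. For any $A \subseteq [p]$, the subgraph $R'[X_A \cup Y]$ with $X_A = \bigcup_{i \in A} X_i$ admits a proper $(r-3)$-coloring that assigns color $j$ to $Y_j$ for each $j \in [q]$ if and only if there is an injection $\sigma \colon A \to [r-3]$ with $\sigma(i) \in J(i) \cup \{q+1, \ldots, r-3\}$ for every $i \in A$; this reduces the claim to finding such an $A$ with $|X_A| \ge |X|/2$.

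I would then establish two inputs to the matching problem. First, $J(i) \cup \{q+1, \ldots, r-3\} \ne \emptyset$ for every $i$: this is automatic when $q < r-3$, and when $q = r-3$ the alternative $J(i) = \emptyset$ would force $X_i$ to be completely adjacent to all $Y_j$, producing an $(X,Y,r-2)$-clique and contradicting the $(X,Y,r-2)$-freeness preserved in \cref{62}. Second, the matching deficiency is at most $1$: for any $A' \subseteq [p]$, taking $B' := [q] \setminus \bigcup_{i \in A'} J(i)$ gives a clique in $R'[I]$ of size $|A'| + |B'|$, so $|A'| + |B'| \le r-2$ by $K_{r-1}$-freeness, which rearranges to $|A'| - |\bigcup_{i \in A'} J(i) \cup \{q+1, \ldots, r-3\}| \le 1$. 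By K\"onig's theorem, the maximum matching on the bipartite graph with edge set $\{(i,j) : j \in J(i) \cup \{q+1,\ldots,r-3\}\}$ covers at least $p-1$ of the indices.

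If the maximum matching is perfect, set $A = [p]$ and the claim holds immediately. Otherwise let $U_0 \subseteq [p]$ be the set of indices that can be left uncovered by some maximum matching. A single-edge swap shows $|U_0| \ge 2$: for $i_0 \in U_0$ witnessed by a maximum matching $M$, any $j_0 \in J(i_0) \cup \{q+1,\ldots,r-3\}$ is already used in $M$ (else $M \cup \{(i_0, j_0)\}$ would augment it), say $(i_1, j_0) \in M$ with $i_1 \ne i_0$, and replacing $(i_1, j_0)$ with $(i_0, j_0)$ yields a maximum matching leaving $u_{i_1}$ uncovered, so $i_1 \in U_0$. Choosing $i^* \in U_0$ with minimum $|X_{i^*}|$ yields $|X_{i^*}| \le |X|/|U_0| \le |X|/2$, and taking $A = [p] \setminus \{i^*\}$ together with the corresponding maximum matching produces $S = X_A \cup Y$ with $|S| \ge |X|/2 + |Y|$ and $\chi(R'[S]) \le r-3$.

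The main obstacle is the combination of the two structural inputs: the deficiency bound relies on extracting a clique from the block structure and invoking $K_{r-1}$-freeness of $R'[I]$, while the non-emptiness of $J(i) \cup \{q+1,\ldots,r-3\}$ requires the $(X,Y,r-2)$-freeness to survive both Zykov symmetrizations via \cref{62}. Once these are in place, the swap argument for $|U_0| \ge 2$ is a short combinatorial step, and the remainder is routine bipartite matching bookkeeping.
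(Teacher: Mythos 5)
Your proof is correct, and its core engine is the same as the paper's: leverage the Zykov-symmetrized block structure of $R'[X]$ and $R'[Y]$, build a bipartite graph recording which class pairs $(X_i,Y_j)$ are non-adjacent, and bound its matching deficiency via the $K_{r-1}$-freeness of $R'[I]$. Your auxiliary graph between $[p]$ and the palette $[r-3]$ is exactly the paper's bipartite graph $B(X^*,Y^*)$ padded with $r-3-q$ universal dummy vertices on the $Y$-side, and your deficiency-at-most-one computation is a repackaging of the paper's bound "matching number $\ge p+q-r+2$." Where the two arguments genuinely diverge is the endgame. The paper picks one uncovered class $X_\ell$, and if $|X_\ell|>|X|/2$ it switches to a different $(r-3)$-colorable subgraph, namely $R'[X_\ell\cup Y]$: here $X_\ell$ is a single independent class and $(X,Y,r-2)$-freeness guarantees $X_\ell$ misses one $Y_j$ entirely, so $X_\ell\cup Y_j$ can be merged into one color class. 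You instead show that at least two indices are skippable (your $|U_0|\ge 2$ swap argument, using that $J(i)\cup\{q+1,\dots,r-3\}\neq\emptyset$, which again rests on $(X,Y,r-2)$-freeness), and then remove the smaller of the two, so you never need the fallback subgraph $X_\ell\cup Y$. Both invocations of $(X,Y,r-2)$-freeness are doing the same work, but your version keeps the target subgraph uniformly of the form $X_A\cup Y$ and avoids the case split, which is arguably a cleaner finish; the paper's version avoids the alternating-path swap and is more directly tied to the Hall-deficiency lemma it cites.
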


\begin{proof}
  To analyze the interaction between $X$ and $Y$, we construct an auxiliary bipartite graph $B:=B(X^*,Y^*)$ defined as follows: 
  Let $X^*=\{x_1,\ldots,x_p\}$ and $Y^*=\{y_1,\ldots,y_q\}$, where each $x_i$ (resp. $y_j$) represents a twin class in $X$ (resp. $Y$).
  An edge $x_iy_j$ exists in $B$ if and only if no edges connect $X_i$ and $Y_j$ in $R'$.

 For any subsets $S\subseteq Y^*$ and $T\subseteq X^*$ with $|S|=s$, $|T|=t$ and $s+t\ge r-1$, the $K_{r-1}$-freeness of $R'$ implies $E_B(S,T)\neq \varnothing$.
 We now show that $|N(S)|\ge p-r+2+s$.
 Otherwise, we have $|X^*\backslash N(S)|\ge p-(p-r+2+s-1)=r-1-s$ and $E_B(S,X^*\backslash N(S))=\varnothing$, a contradiction.
 By \cref{lmm:Hall-cond}, the matching number of $B$ is $|Y^*|-\max\{|S|-|N(S)|\}\ge p+q-r+2$.

 If $X^*\subseteq V(M)$, then $\chi(R'[X\cup Y])=\chi(R'[Y])=q\le r-3$, $R'[X\cup Y]$ is the desired subgraph. Otherwise, choose  a vertex $x_{\ell}\in X^*\setminus V(M)$ and set $X'= X\setminus X_{\ell}$.

 Note that $X_i\cup Y_j$ is an independent set if $x_i$ is adjacent to $y_j$ in $B$.
 We have $\chi(R'[X'\cup Y])\le (p-1)+q-|M|\le r-3$. 

\begin{itemize}
    \item If $|X'|\ge |X|/2$, then $R'[X'\cup Y]$ is an $(r-3)$-colorable induced subgraph of size at least $|X|/2+|Y|$.
    \item If $|X'|< |X|/2$ and $\chi(R'[Y])\le r-3$, since there is no $(X_{\ell},Y,r-2)$-clique, $R'[X_{\ell}\cup Y]$ is an $(r-3)$-colorable induced subgraph of size at least $|X|/2+|Y|$.
\end{itemize}
We complete the proof of the claim.
\end{proof}

By \cref{cl:r-3-color}, there is a $K_{r-2}$-free induced subgraph of $R'$ of size at least $|X|/2+|Y|\ge \delta k\ge \frac{r-3}{r-1}k$.
By \cref{lmm:basic}, $e(R)\le e(R')\le e(T_{r-3}(\delta k)\vee T_{2}(k-\delta k))=f_2(r,\delta)k^2$.
\end{proof}

\section*{Acknowledgment}

This work was completed while the authors were visiting the Institute for Basic Science (IBS).
We are grateful to Hong Liu for helpful suggestions and for the warm hospitality provided during our visit.

\bibliographystyle{abbrv}
\bibliography{references.bib}

\end{document}